\documentclass[10pt]{siamltex}
\pdfoutput=1
\usepackage{a4,amsmath,amsfonts,graphicx}

\newtheorem{hypothesis}{Hypothesis}
% \documentclass[12pt]{scrartcl}
% \pdfoutput=1
% \usepackage{amsmath}
%\usepackage{ntheorem,calc}
\usepackage{url,color}
\usepackage[pdftex,colorlinks]{hyperref}
\definecolor{darkblue}{cmyk}{1,0,0,0.8}
\definecolor{darkred}{cmyk}{0,1,0,0.7}
\hypersetup{anchorcolor=black,
  citecolor=darkblue, filecolor=darkblue,
  menucolor=darkblue,pagecolor=darkblue,urlcolor=darkblue,linkcolor=darkblue}
\usepackage[scaled=0.9]{helvet}
\usepackage[charter]{mathdesign}%{fourier}
\usepackage[font=small,up]{caption}
\usepackage{microtype} 
\usepackage{mythmbox}
%\typearea{12}
% \theoremstyle{break}
% \theorembodyfont{\normalfont} 
% \theoremindent1.5em
% \theoremprework{\par\noindent\begin{flushright}\begin{minipage}%
%       {\textwidth-\theoremindent}\bigskip\hrule\medskip}
% \theorempostwork{\hrule\bigskip\end{minipage}\end{flushright}}
% \newtheorem{conjecture}{Conjecture}
% \theoremprework{\par\noindent\begin{flushright}\begin{minipage}%
%       {\textwidth-\theoremindent}\bigskip\hrule\medskip}
% \theorempostwork{\hrule\bigskip\end{minipage}\end{flushright}}
% \newtheorem{lemma}[conjecture]{Lemma}
% \theoremprework{\par\noindent\begin{flushright}\begin{minipage}%
%       {\textwidth-\theoremindent}\bigskip\hrule\medskip}
% \theorempostwork{\hrule\bigskip\end{minipage}\end{flushright}}
% \newtheorem{corollary}[conjecture]{Corollary}
% \theoremprework{\par\noindent\begin{flushright}\begin{minipage}%
%       {\textwidth-\theoremindent}\bigskip\hrule\medskip}
% \theorempostwork{\hrule\bigskip\end{minipage}\end{flushright}}
% \newtheorem{theorem}[conjecture]{Theorem}
% \theoremprework{\par\noindent\begin{flushright}\begin{minipage}%
%       {\textwidth-\theoremindent}\bigskip\hrule\medskip}
% \theorempostwork{\hrule\bigskip\end{minipage}\end{flushright}}
% \newtheorem{assumption}[conjecture]{Assumption}

\newtheorem[S]{corollary}{Corollary}[section]
\newtheorem[S]{theorem}[corollary]{Theorem}
\newtheorem[S]{definition}[corollary]{Definition}
\newtheorem[S]{hypothesis}[corollary]{Hypothesis}
\newtheorem[S]{lemma}[corollary]{Lemma}
\newtheorem[S]{proposition}[corollary]{Proposition}
\newtheorem[S]{introthm}{Main Result}

\newcommand{\R}{\mathbb{R}}
\newcommand{\C}{\mathbb{C}}
\newcommand{\Lint}{\mathbb{L}}
\newcommand{\id}{I}

\renewcommand{\d}{{\mathrm{d}}}
\renewcommand{\phi}{\varphi}

\pagestyle{myheadings}
\thispagestyle{plain}
\markboth{J. Sieber and R. Szalai}{Characteristic matrices for periodic DDEs}

\title{Characteristic matrices for linear periodic delay differential
  equations}
\author{Jan Sieber\footnotemark[1] \and  Robert Szalai\footnotemark[2]}
\begin{document}
\maketitle
\footnotetext[1]{Dept. of Mathematics, university of Portsmouth (UK)}
\footnotetext[2]{Dept. of Engineering Mathematics, University of
  Bristol (UK)}
\begin{abstract}
  \noindent Szalai et al.\ (SIAM J.\ on Sci.\ Comp.\ 28(4), 2006) gave
  a general construction for characteristic matrices for systems of
  linear delay-differential equations with periodic
  coefficients. First, we show that matrices constructed in this way
  can have a discrete set of poles in the complex plane, which may
  possibly obstruct their use when determining the stability of the
  linear system. Then we modify and generalize the original
  construction such that the poles get pushed into a small
  neighborhood of the origin of the complex plane.
\end{abstract}
\begin{AMS}
  34K06, 34K08, 34K20
\end{AMS}
\begin{keywords}
  delay differential equations, characteristic matrix, stability of
  periodic orbits
\end{keywords}
\section{Introduction}
\label{sec:intro}
Linear delay differential equations (DDEs) with coefficients periodic
in time come up naturally when one analyses nonlinear DDEs: one of the
most common and simplest invariant sets encountered in nonlinear
systems are periodic orbits \cite{GH83}. If one wants to find out if a
periodic orbit is dynamically stable or unstable, or how its stability
changes as one changes system parameters one has to linearize the
nonlinear DDE system in the periodic orbit, which results in a linear
DDE with time-periodic coefficients \cite{DGLW95,HL93}.  This linear
DDE is typically written in the form
\begin{equation}
  \label{eq:introdde}
  \dot x(t)=L(t)x_t
\end{equation}
where the coefficients (hidden in the operator $L(t)$) are periodic in
$t$. Without loss of generality we can assume that the time dependence
of $L$ is of period $1$ (this corresponds to rescaling time and $L$).
Then the exponential asymptotic stability of the periodic orbit in the
full nonlinear system is determined by the eigenvalues of the time-$1$
map $T$ generated by the linear DDE \eqref{eq:introdde}. See the
textbooks \cite{DGLW95,HL93} for a summary of the fundamental results
on Floquet theory and \cite{RS07} for robust numerical methods to
calculate these eigenvalues in practical problems. For analytical
purposes (and, possibly, for numerical purposes) it is useful to
reduce the eigenvalue problem for the infinite-dimensional time-$1$
map $T$ to an algebraic problem via a characteristic matrix
$\Delta(\lambda)\in\C^{n\times n}$. One would expect that this matrix
$\Delta$ should satisfy, for example, that
\begin{romannum}
\item $\Delta(\lambda)$ is regular if (and only if) $\lambda\in\C$ is
  in the resolvent set of $T$ (that is, $\lambda\id-T$ is an
  isomorphism),
\item $\lambda$ is a root of $\det\Delta(\cdot)$ of order $q$ if and
  only if $\lambda$ is an eigenvalue of $T$ of algebraic multiplicity
  $q$ (see \cite{KL92} or Lemma~\ref{thm:multiplicity} in
  \S\ref{sec:fix} how one can construct the Jordan chains of $T$
  from $\Delta$), and
\item $\dim\ker\Delta(\lambda)$ is the geometric multiplicity of $\lambda$
  as an eigenvalue of $T$.
\end{romannum}
An immediate consequence of the existence of such a matrix $\Delta$
would be the upper limit $n$ (the dimension of $\Delta(\lambda)$) on the
geometric multiplicity of eigenvalues of $T$. 

For time-invariant linear DDEs (where $L$ does not depend on $t$ in
\eqref{eq:introdde}) the existence of a characteristic matrix has been
known and used for a long time. The general theoretical background and
construction (extended to more general evolutionary systems such as,
for example, neutral equations and some classes of hyperbolic partial
differential equations) is given by Kaashoek and Verduyn-Lunel in
\cite{KL92}.

For time-periodic linear DDEs the textbook of Hale and Verduyn-Lunel
\cite{HL93} has developed characteristic matrices only for the case
where all time delays are multiples of the period. This has been
generalized to delays depending rationally on the period in order to
derive analytical stability results for periodic orbits of the
classical scalar delayed positive feedback problem \cite{SW06}. A
characteristic function in the form of a continued fraction expansion
is given for a scalar DDE in \cite{J00}. In general, precise
analytical statements about the location of Floquet multipliers (or
the number of unstable Floquet multipliers) can be made only for DDEs
with very special structure, typically scalar DDEs with single delays
and particular requirements on the right-hand side, see
\cite{CW88,D92,W91,X92} for classical works. An extension of these
results to cyclic DDEs using a discrete Lyapunov function is given in
\cite{MS96}.

Szalai et al.\ give a general construction for a characteristic matrix
$\Delta$ for time-periodic DDEs in \cite{SSH06} that lends itself
easily to robust numerical computation. Since one expects that
$\Delta(\lambda)$ must have an essential singularity at $\lambda=0$
(the eigenvalues of the compact time-$1$ map $T$ accumulate at $0$)
the constructed matrix is rather of the form $\Delta(\mu)$ where
$\mu=\lambda^{-1}$. We show (in \S\ref{sec:problem}) that the
characteristic matrix $\Delta(\mu)$ as constructed by Szalai et al.\
typically has poles in the complex plane, making it unusable whenever
eigenvalues of $T$ of interest coincide with these poles. However, the
set of these poles is discrete and accumulates only at $\infty$.
Moreover, in \S\ref{sec:fix} we provide a modification $\Delta_k(\mu)$
of the original matrix $\Delta(\mu)$, which allows us to push the
poles of $\Delta_k$ to the outside of any given ball of radius
$R\geq1$ in the complex plane by increasing the dimension of
$\Delta_k$. This modification reduces the eigenvalue problem for $T$
to a root-finding problem for $\det\Delta_k(\mu)$ for all eigenvalues
of $T$ with modulus larger than $1/R$, which is useful because one is
typically interested in the largest eigenvalues of $T$. To keep the
notational overhead limited we develop our arguments for the case of a
DDE with a single delayed argument and a constant delay $\tau<1$. In
Appendix~\ref{sec:gen} we show that the generalization to arbitrary
delays (distributed and reaching arbitrarily far into the past) is
straightforward.

One immediate application for the characteristic matrix came up in
\cite{YP09} where Yanchuk and Perlikowski consider periodic orbits of
nonlinear DDEs with a single fixed delay $\tau$ but change the delay
to $\tau+kP$ (where $k$ is an integer and $P$ is the period of the
periodic orbit), and then study the stability of the periodic orbit
(which does not change its shape) in the limit $k\to\infty$. The limit
is, of course, a singular limit if one considers the time-$P$ map $T$
but the characteristic matrix $\Delta_k$ has a well-defined regular
limit. This permitted the authors to draw conclusions about the linear
stability properties of periodic orbits for sufficiently large delays
from the properties of the so-called \emph{pseudo-continuous spectrum}
\cite{YP09}. As \cite{YP09} relies on the original construction from
\cite{SSH06}, which may have poles near the unit circle, our note
closes a gap in the argument of \cite{YP09}.

\section{Construction of characteristic matrix 
  for a single fixed delay}
\label{sec:constr}
Consider a periodic linear differential equation of dimension $n$ with
a single delay $\tau$ and continuous periodic coefficient matrices
$A(t)$ and $B(t)$ (both of size $n\times n$):
\begin{equation}
  \label{eq:spdde}
  \dot x(t)=A(t)x(t)+B(t)x(t-\tau)
\end{equation}
where we assume that the period of the time dependence of $A$ and $B$
equals $1$ without loss of generality (this corresponds to rescaling
time, $\tau$, $A$ and $B$). We also assume for simplicity of notation
that $\tau<1$.  Let us denote the monodromy operator (also called the
time-$1$ map) for \eqref{eq:spdde} by $T: C([-1,0];\C^n)\mapsto
C([-1,0];\C^n)$. $C([-1,0];\C^n)$ is the space of continuous functions
on the interval $[-1,0]$ with values in $\C^n$. That is, $T$ maps an
initial condition $x$ in $C([-1,0];\C^n)$ to the solution of
\eqref{eq:spdde} after time $1$ starting from the initial value $x(0)$
(the head point) and using the history segment $x(\cdot)$. For $x\in
C([-1,0];\C^n)$ we define $Tx$ precisely as the solution $y\in
C([-1,0];\C^n)$ of
\begin{equation}\label{eq:tdef}
  \begin{split}
    \dot y(t) &=A(t) y(t)+B(t)
    \begin{cases}
      y(t-\tau)&\mbox{if  $t\in[\tau-1,0]$}\\
      x(1+t-\tau) &\mbox{if $t\in[-1,\tau-1)$}
    \end{cases}\\
    y(-1)&=x(0)\mbox{.}
  \end{split}
\end{equation}
The complex number $\lambda$ is an eigenvalue of $T$ (a \emph{Floquet
  multiplier} for \eqref{eq:spdde}) if one can find a non-zero $x$
such that $Tx=\lambda x$.  We know that $T$ is compact \cite{HL93} for
$\tau<1$ (and that, for arbitrary $\tau$, we can find a power $T^m$
that is compact).  Hence, any non-zero $\lambda$ is either in the
resolvent set of $T$ (that is, $\lambda\id-T$ is regular) or it is an
eigenvalue of $T$ with a finite algebraic multiplicity.

If we introduce $\mu=1/\lambda$ we may write the eigenvalue problem
for $T$ as follows:
\begin{align}
  \label{eq:bvp}
  \dot x(t)&=A(t)x(t)+B(t)
  \begin{cases}
    x(t-\tau) &\mbox{if $t\in[\tau-1,0]$,}\\
    \mu x(1+t-\tau) &\mbox{if  $t\in[-1,\tau-1)$,}
  \end{cases}\\
  \label{eq:bc}
  x(-1)&=\mu x(0)
\end{align}
where $x\in C([-1,0];\C^n)$ is continuous.  Reference \cite{SSH06}
constructs a characteristic matrix $\Delta(\mu)$ for $T$ based on the
following hypothesis.
\begin{hypothesis}[Szalai'06]\label{thm:sz}
The initial-value problem on the interval $[-1,0]$
\begin{align}
  \label{eq:ivp}
  \dot x(t)&=A(t)x(t)+B(t)
  \begin{cases}
    x(t-\tau) &\mbox{if $t\in[\tau-1,0]$,}\\
    \mu x(1+t-\tau) &\mbox{if  $t\in[-1,\tau-1)$,}
  \end{cases}\\
  \label{eq:ic}
  x(-1)&=v
\end{align}
has a unique solution $x\in C([-1,0];\C^n)$ for all $\mu\in\C$ and
$v\in\C^n$.
\end{hypothesis}
Note that the problem \eqref{eq:ivp}--\eqref{eq:ic} differs from the
boundary-value problem \eqref{eq:bvp}--\eqref{eq:bc} because we have
replaced the boundary condition $x(-1)=\mu x(0)$ by an initial
condition $x(-1)=v$. The formulation of the hypothesis in \cite{SSH06}
is (only apparently) weaker: it claims the existence only for
$\mu\in\C$ for which $1/\mu$ is either in the resolvent set of $T$ or
an eigenvalue of $T$ of algebraic multiplicity one.  If
Hypothesis~\ref{thm:sz} is true then one can define the characteristic
matrix $\Delta(\mu)$ via
\begin{equation}
  \label{eq:delta}
  \Delta(\mu)v=v-\mu x(0)
\end{equation}
where $x(0)$ is the value of the unique solution of
\eqref{eq:ivp}--\eqref{eq:ic} at the end of the time interval
$[-1,0]$.

\paragraph*{Integral equation for \eqref{eq:ivp}--\eqref{eq:ic}}
For system~\eqref{eq:ivp}--\eqref{eq:ic} one has to clarify what it
means for $x\in C([-1,0];\C^n)$ to be a solution. We call $x\in
C([-1,0];\C^n)$ a solution of \eqref{eq:ivp}--\eqref{eq:ic} if $x$
satisfies the integral equation
\begin{align}
  \label{eq:varp}
  x(t)&=v+ M_1(\mu)[x](t)\quad\mbox{where}\\
  \label{eq:varm}
  \left[M_1(\mu)x\right](t)&=\int_{-1}^tA(s)x(s)+B(s)
    \begin{cases}
      x(s-\tau) &\mbox{if $s\in[\tau-1,0]$,}\\
      \mu x(1+s-\tau) &\mbox{if  $s\in[-1,\tau-1)$}
    \end{cases}\d s
\end{align}
pointwise for all times $t$ on the interval $[-1,0]$. The linear operator
$M_1(\mu)$ maps $C([-1,0];\C^n)$ back into $C([-1,0];\C^n)$ and is
compact. ($M_1$ will be generalized later to $M_k$ for $k>1$.)

\begin{lemma}\label{thm:smallmu}
  For $\mu$ sufficiently close to $0$ the initial-value problem
  \eqref{eq:ivp}--\eqref{eq:ic} has a unique solution for all $v$.
\end{lemma}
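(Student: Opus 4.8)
The plan is to recast the initial value problem as the fixed point equation $x = v + M_1(\mu)[x]$ on the Banach space $C([-1,0];\C^n)$, which by definition is what it means for $x$ to solve (2.7)–(2.8), and to show that for $\mu$ small the map $x \mapsto v + M_1(\mu)[x]$ is a contraction. Equivalently, I would show that $\id - M_1(\mu)$ is invertible for small $\mu$, so that the unique solution is given by $x = (\id - M_1(\mu))^{-1} v$.

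\begin{proof}[Proof plan]
First I would equip $C([-1,0];\C^n)$ with the supremum norm and estimate the operator norm of $M_1(\mu)$. Writing $a = \max_{s}\|A(s)\|$ and $b = \max_s\|B(s)\|$ (finite since $A$, $B$ are continuous and hence bounded on the compact period interval), the integral in \eqref{eq:varm} runs over an interval of length at most $1$, so a crude bound gives $\|M_1(\mu)x\|_\infty \le (a + b\max(1,|\mu|))\,\|x\|_\infty$. The factor $\max(1,|\mu|)$ comes from the two branches of the history term: on $[\tau-1,0]$ the argument $s-\tau$ stays in $[-1,0]$ and contributes $b$, while on $[-1,\tau-1)$ the term $\mu\,x(1+s-\tau)$ contributes $b|\mu|$. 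This single estimate is not yet a contraction because the coefficient $a+b$ need not be less than $1$.

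To obtain a genuine contraction I would use the standard trick of replacing the supremum norm by an equivalent weighted (Bielecki-type) norm $\|x\|_\gamma = \sup_{t\in[-1,0]} \e^{-\gamma(t+1)}\|x(t)\|$ for a large parameter $\gamma>0$. Because $M_1(\mu)$ is a Volterra-type (causal) operator built from integration from the left endpoint $-1$, this reweighting reduces its operator norm: a direct computation gives a bound of the form $\|M_1(\mu)x\|_\gamma \le \frac{a + b\max(1,|\mu|)}{\gamma}\,\|x\|_\gamma$, which can be made strictly less than $1$ by choosing $\gamma$ large. For any fixed such $\gamma$, the map is then a contraction for all $\mu$ with $|\mu|$ bounded, and in particular for $\mu$ in a neighborhood of $0$, where $\max(1,|\mu|)=1$. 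The Banach fixed point theorem then yields a unique $x$ solving \eqref{eq:varp}, which is exactly the asserted unique solution.

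The main subtlety — and the only place requiring care rather than routine estimation — is handling the piecewise definition of the history term in \eqref{eq:varm} when passing to the weighted norm, since the delayed argument $s-\tau$ (respectively $1+s-\tau$) must be controlled in the weighted norm even though it is evaluated at a shifted time. Here causality still helps: on $[\tau-1,0]$ the delayed time $s-\tau \le s \le t$ lies no later than the current time, so its weight $\e^{-\gamma(s-\tau+1)} \ge \e^{-\gamma(s+1)}$ only improves the estimate, while on $[-1,\tau-1)$ the term $\mu x(1+s-\tau)$ has $1+s-\tau < s + (1-\tau) $, and for $\mu$ near $0$ the prefactor $|\mu|$ absorbs any loss from this forward shift of size $1-\tau<1$. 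Alternatively, one may simply restrict to $|\mu|$ small from the outset, in which case the history contribution is uniformly small and no weighted norm is even needed: the factor $a$ can be absorbed by choosing $\gamma$ large as above, and the remaining $b|\mu|$-term is negligible. I expect this to be the crux, after which invertibility of $\id - M_1(\mu)$ and the representation $x=(\id-M_1(\mu))^{-1}v$ follow immediately.
\end{proof}
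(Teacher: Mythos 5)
Your proposal is correct, but it follows a genuinely different route from the paper's proof. The paper never estimates $\|M_1(\mu)\|$ at all: it observes that $y=M_1(0)y$ has only the trivial solution (because the time-$1$ map $T$ is well defined, with $Tx=0$ for $x=0$), invokes the Fredholm alternative for the compact perturbation $\id-M_1(0)$ of the identity to get invertibility at $\mu=0$, and then uses continuity of $\mu\mapsto M_1(\mu)$ in operator norm together with openness of the set of invertible operators to conclude for small $\mu$. You instead make $x\mapsto v+M_1(\mu)x$ an honest contraction in a Bielecki weighted norm and apply the Banach fixed point theorem. What your approach buys: it is elementary (no compactness, no Fredholm theory, no appeal to well-posedness of the underlying DDE) and quantitative, producing an explicit admissible radius for $|\mu|$ once $\gamma$ is fixed; it is also essentially the same device the paper itself deploys later for the general-delay case, where the weighted norm $\|x\|_R=\max_t|R^tx(t)|$ of Lemma~\ref{thm:uniquegen} tames the shift operators. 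What the paper's approach buys: the compact-operator setup is precisely what Lemma~\ref{thm:finpoles} needs immediately afterwards to show that $[\id-M_1(\mu)]^{-1}$ is meromorphic with finitely many poles of finite multiplicity in any bounded set --- a global statement that no contraction argument can reach. One caveat on your write-up: the bound $\|M_1(\mu)x\|_\gamma\le\gamma^{-1}\left(a+b\max(1,|\mu|)\right)\|x\|_\gamma$ claimed in your second paragraph is false as stated, because $M_1(\mu)$ is \emph{not} causal --- the branch $\mu x(1+s-\tau)$ evaluates $x$ forward in time by $1-\tau$, so the $b|\mu|$ term must carry an extra factor $\e^{\gamma(1-\tau)}$, which blows up as $\gamma\to\infty$. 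Your final paragraph identifies and repairs exactly this point (fix $\gamma$ large enough to make the causal contribution small, then shrink $|\mu|$ to absorb $\e^{\gamma(1-\tau)}$), and with that quantifier order made explicit the argument is sound.
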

\begin{proof}
  For \eqref{eq:varp} to have a unique solution it is enough to show
  that $\id-M_1(\mu)$ is invertible. Since the time-$1$ map $T$ is
  well defined and linear in $x=0$ (that is, $Tx=0$ for $x=0$) we have
  that \eqref{eq:tdef} has only the trivial solution for $x=0$. This
  in turn implies that the corresponding integral equation
  \begin{equation}\label{eq:fixp0}
    y=M_1(0)y
  \end{equation}
  has only the trivial solution in $C([-1,0];\C^n)$. Thus, the kernel
  of $\id-M_1(0)$ is trivial, and, since $\id-M_1(0)$ is a compact
  perturbation of the identity, $\id-M_1(0)$ is invertible. Moreover,
  the real-valued function $\mu\mapsto\|M_1(\mu)-M_1(0)\|$ (using the
  operator norm induced by the maximum norm of $C([-1,0];\C^n)$) is
  continuous in $\mu$ because $M_1$ depends continuously on $\mu$ (see
  definition of $M_1$ in \eqref{eq:varm}). This implies that
  $\id-M_1(\mu)$ is also invertible for small $\mu$, which in turn
  means that \eqref{eq:varp}--\eqref{eq:varm} has a unique solution
  for small $\mu$.
\end{proof}

The following lemma states that the function on the complex domain
$\mu\mapsto[\id-M_1(\mu)]^{-1}$, which has operators in
${\cal L}(C([-1,0];\C^n);C([-1,0];\C^n))$ as its values, is well defined and
analytic everywhere in the complex plane except, possibly, in a
discrete set of poles.
\begin{lemma}\label{thm:finpoles}
  The operator-valued complex function $\mu\mapsto[\id-M_1(\mu)]^{-1}$
  can have at most a finite number of poles in any bounded subset of
  $\C$. All poles of $[\id-M_1(\mu)]^{-1}$ (if there are any) are of
  finite multiplicity. If $\mu$ is not a pole then
  $\mu\mapsto[\id-M_1(\mu)]^{-1}$ is analytic in $\mu$.
\end{lemma}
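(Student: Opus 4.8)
The plan is to recognize $\id-M_1(\mu)$ as an analytic family of compact perturbations of the identity and then invoke the analytic Fredholm theorem. First I would make the $\mu$-dependence of $M_1$ explicit. Inspecting \eqref{eq:varm}, the parameter $\mu$ enters only through the lower branch, so
\begin{equation*}
  M_1(\mu)=M_1(0)+\mu N\mbox{,}\qquad
  [Nx](t)=\int_{-1}^{\min\{t,\tau-1\}}B(s)\,x(1+s-\tau)\,\d s\mbox{,}
\end{equation*}
with $N$ a fixed operator independent of $\mu$. Thus $\mu\mapsto M_1(\mu)$ is affine, hence entire, as a map from $\C$ into ${\cal L}(C([-1,0];\C^n))$; this analyticity is what the conclusion requires, and it is stronger than the mere continuity used in the proof of Lemma~\ref{thm:smallmu}. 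Moreover $N$ is compact (being an indefinite integral of $B(\cdot)$ against a shifted argument, it maps bounded sets in $C([-1,0];\C^n)$ to equicontinuous families, so compactness follows from Arzel\`a--Ascoli exactly as for $M_1$ itself), and $M_1(0)$ is compact; hence $M_1(\mu)$ is compact for every $\mu$.

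The second step is to apply the analytic Fredholm theorem on the connected domain $D=\C$. For an analytic, compact-operator-valued family $K(\mu)$ on a connected open set, this theorem gives the following dichotomy: either $\id-K(\mu)$ fails to be invertible for every $\mu\in D$, or else $(\id-K(\mu))^{-1}$ exists and is analytic on $D$ except on a set $S\subset D$ having no accumulation point in $D$, and at each point of $S$ the inverse has a pole of finite order whose principal-part coefficients are finite-rank operators. I would take $K=M_1$. Since each $\id-M_1(\mu)$ is a compact perturbation of the identity, it is Fredholm of index zero, so invertibility is equivalent to triviality of the kernel and the theorem applies verbatim.

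The third step rules out the degenerate alternative. By Lemma~\ref{thm:smallmu}, $\id-M_1(\mu)$ is invertible for all $\mu$ in a neighbourhood of $0$, so the first horn of the dichotomy cannot occur. Hence we are in the second case: the pole set $S$ has no finite accumulation point, which means $S\cap B$ is finite for every bounded $B\subset\C$; each pole has finite order (finite multiplicity), since the principal part is a finite sum of finite-rank terms; and $\mu\mapsto[\id-M_1(\mu)]^{-1}$ is analytic at every $\mu\notin S$. This is precisely the assertion of the lemma.

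The only genuine content is the analytic Fredholm theorem, so the main obstacle here is a matter of citation rather than computation. If one preferred a self-contained argument, the work would concentrate on the local analysis at a putative pole $\mu_0$: one uses the Riesz projection onto the finite-dimensional generalized kernel of $\id-M_1(\mu_0)$ to split the space, reducing invertibility of $\id-M_1(\mu)$ near $\mu_0$ to invertibility of a finite matrix depending analytically on $\mu$. Because $\det$ of that matrix is a scalar analytic function that does not vanish identically (by the non-degenerate alternative), its zeros are isolated and of finite order, and inverting by Cramer's rule yields a pole of finite order with finite-rank residues. Establishing this reduction cleanly — in particular, that the complementary infinite-dimensional block stays boundedly invertible near $\mu_0$ — is the step that would need the most care.
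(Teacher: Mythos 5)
Your proof is correct, but it takes a genuinely different route from the paper's. The paper starts from the same splitting $M_1(\mu)=M_1(0)+\mu L$ (your $N$ is exactly the operator $L$ of \eqref{eq:ldef}), but instead of invoking the analytic Fredholm theorem it factors
$\id-M_1(\mu)=\left(\id-M_1(0)\right)\left[\id-\mu\,(\id-M_1(0))^{-1}L\right]$,
so that, with $\mu=1/\lambda$,
$[\id-M_1(\mu)]^{-1}=\lambda\left[\lambda\id-(\id-M_1(0))^{-1}L\right]^{-1}(\id-M_1(0))^{-1}$:
everything is thereby reduced to the ordinary resolvent of the \emph{single} compact operator $K=(\id-M_1(0))^{-1}L$, and the lemma follows from Riesz--Schauder theory (the paper cites Kato) rather than from the analytic Fredholm alternative. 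Both arguments hinge on the same two ingredients, invertibility of $\id-M_1(0)$ (established as in Lemma~\ref{thm:smallmu}) and compactness, and both ultimately rest on a citation. The trade-off is this: the paper's factorization pinpoints the poles as the reciprocals $1/\lambda$ of the non-zero eigenvalues of $K$, which is precisely the description Section~\ref{sec:problem} uses to explain when poles must be expected; your argument does not yield that localization, but it requires only analyticity, not affinity, of $\mu\mapsto M_1(\mu)$, so it applies verbatim when the $\mu$-dependence is polynomial (delays exceeding the period, as in the modification for $\tau\in(1,2]$) or in the general setting of Appendix~\ref{sec:gen}, where the paper's single-resolvent trick would need an additional linearization step.
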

\begin{proof}
  We can split the operator $M_1(\mu)$ into a sum of two operators:
  \begin{align}
    M_1(\mu)&= M_1(0)+\mu L \mbox{\quad where}\label{eq:msplit}\\
    \left[Lx\right](t)&=\int_{-1}^tB(s)
    \begin{cases}
      0 &\mbox{if $s\in[\tau-1,0]$,}\\
      x(1+s-\tau) &\mbox{if  $s\in[-1,\tau-1)$}
    \end{cases}\d s\mbox{.}\label{eq:ldef}
  \end{align}
  The operator $\id-M_1(0)$ is invertible (and a compact perturbation of
  the identity) and $L:C([-1,0];\C^n)\mapsto C([-1,0];\C^n)$ is compact.
  Thus, we can rewrite $\id-M_1(\mu)$ as
  \begin{align*}
    \id-M_1(\mu)\phantom{]^{-1}}&=\left(\id-M_1(0)\right)
    \left[\id-\mu(\id-M_1(0))^{-1}L\right]&&\mbox{and, thus,}\\
    [\id-M_1(\mu)]^{-1}&=
    \lambda\cdot\left[\lambda\id-(\id-M_1(0))^{-1}
        L\right]^{-1}(\id-M_1(0))^{-1}
  \end{align*}
  (keeping in mind that $\mu=1/\lambda$). The function
  $\lambda\mapsto[\lambda\id-(\id-M_1(0))^{-1}L]^{-1}$ on the
  right-hand side is the standard resolvent of the compact operator
  $(\id-M_1(0))^{-1}L$, which has only finitely many poles of finite
  multiplicity outside of any neighborhood of the complex origin
  according to \cite{K66}, and is analytic everywhere else.
\end{proof}

Lemma~\ref{thm:finpoles} carries over to the characteristic matrix
$\Delta(\mu)$ defined in \eqref{eq:delta}: since
$x=[\id-M_1(\mu)]^{-1}v$ (if we extend $v\in\C^n$ to a constant
function), $\Delta(\mu)v=v-\mu x(0)$  is an analytic function with
at most a discrete set of poles of finite multiplicity (possibly
accumulating at $\infty$).

The characteristic matrix $\Delta(\mu)$ (where it is well defined) has
the properties one expects: for a complex number $\mu$ in the domain of
definition of $\Delta$, $1/\mu$ is a Floquet multiplier of the DDE
\eqref{eq:spdde} if and only if $\Delta(\mu)$ is singular
($\det\Delta(\mu)=0$).  Any vector $v$ in its kernel is the value of
an eigenfunction for $1/\mu$ at time $t=-1$. The full eigenfunction
can be obtained as the solution of the initial-value problem
\eqref{eq:ivp}--\eqref{eq:ic}.  If $\Delta(\mu)$ is regular then
$1/\mu$ is in the resolvent set of the eigenvalue problem
\eqref{eq:bvp}--\eqref{eq:bc}. The Jordan chains associated to
eigenvalues $1/\mu$ can be obtained by following the general theory
described in \cite{KL92}. We give a precise statement (see
Lemma~\ref{thm:multiplicity} in Section~\ref{sec:fix}) about the
Jordan chain structure after discussing Hypothesis~\ref{thm:sz}.

The construction can be extended in a straightforward manner to
multiple fixed discrete delays and distributed delays. The only
modification is that higher powers of $\mu$ have to be included for
delays larger than $1$. For example, if $\tau\in(1,2]$ then the
initial-value problem \eqref{eq:ivp}--\eqref{eq:ic} is modified to
\begin{align*}
  %\label{eq:ivp}
  \dot x(t)&=A(t)x(t)+B(t)
  \begin{cases}
    \mu x(1+t-\tau) &\mbox{if $t\in[\tau-2,0]$,}\\
    \mu^2 x(2+t-\tau) &\mbox{if  $t\in[-1,\tau-2)$,}
  \end{cases}\\
  %\label{eq:ic}
  x(-1)&=v\mbox{.}
\end{align*}
Thus, also for arbitrary delays $\Delta(\mu)$ always has at worst
finitely many poles of finite multiplicity in any bounded subset of
$\C$.  Appendix~\ref{sec:gen} discusses the case where the functional
accessing the history of $x$ is a Lebesgue-Stieltjes integral.

\section{The poles of the characteristic matrix}
\label{sec:problem}
Hypothesis~\ref{thm:sz} is not true in its stated form (and neither in
the form stated in \cite{SSH06}). 
Let us analyse the simple scalar example
\begin{equation}
  \label{eq:cdde}
  \dot x(t)=x\left(t-\frac{1}{2}\right)
\end{equation}
on the interval $[-1,0]$. Problem \eqref{eq:ivp}--\eqref{eq:ic}
restated for this example is equivalent to
\begin{equation}
  \label{eq:exdde}
  \begin{aligned}
    \dot x_1(t)&=\mu x_2(t)\mbox{,} & x_1(0)&=v\\
    \dot x_2(t)&=x_1(t)\mbox{,} & x_2(0)&=x_1(1/2)
  \end{aligned}
\end{equation}
where $x_1$ and $x_2$ are in $C([0,1/2];\C)$, and $x_1(t)=x(t-1)$ and
$x_2(t)=x(t-1/2)$ for $t$ in the interval $[0,1/2]$. The time-$1/2$
map for the IVP of the ODE \eqref{eq:exdde} with initial conditions
$x_1(0)=v$, $x_2(0)=w$ is
\begin{displaymath}
  \begin{bmatrix}
    x_1(1/2)\\ x_2(1/2)
  \end{bmatrix}=
  \begin{bmatrix}
    \cosh\left(\frac{1}{2}\sqrt{\mu}\right) &
    \sqrt{\mu}\sinh\left(\frac{1}{2}\sqrt{\mu}\right)\\[1ex]
    \sinh\left(\frac{1}{2}\sqrt{\mu}\right)/\sqrt{\mu} &
    \cosh\left(\frac{1}{2}\sqrt{\mu}\right)
  \end{bmatrix}
  \begin{bmatrix}
    v\\[1ex] w
  \end{bmatrix}
\end{displaymath}
where $\sqrt{\smash[b]{\mu}}$ refers to the principal branch of the
complex square root (the set of solutions is the same for both
branches as all entries in the matrix are even). The continuity
condition $x_2(0)=x_1(1/2)$ in \eqref{eq:exdde} is satisfied if and
only if
\begin{displaymath}
  \left[1-\sqrt{\mu}\sinh\left(\frac{1}{2}\sqrt{\mu}\right)\right]w+
  \cosh\left(\frac{1}{2}\sqrt{\mu}\right) v=0\mbox{,}
\end{displaymath}
which has a unique solution for arbitrary $v$ if and only if
\begin{displaymath}
  0\neq 1-\sqrt{\mu}\sinh\left(\frac{1}{2}\sqrt{\mu}\right)\mbox{.}
\end{displaymath}
Thus, we see that for example \eqref{eq:cdde} the initial-value
problem \eqref{eq:ivp}--\eqref{eq:ic} cannot be solved for non-zero
$v$ whenever $\mu$ is a root of the function
\begin{displaymath}
  f(\mu)=1-\sqrt{\mu}\sinh\left(\frac{1}{2}\sqrt{\mu}\right)\mbox{.}
\end{displaymath}
This function $f$ is a globally defined real analytic function of
$\mu$ (the expression on the right-hand side is even in $\sqrt{\mu}$).
It has an infinite number of complex roots accumulating at
infinity. One of the roots is real: $f(0)=1$, and $f(\mu)\to-\infty$
for $\mu\to+\infty$ and $\mu\in\R$ (let's call it $\mu_0$:
$\mu_0\approx1.8535$). Consequently, example \eqref{eq:cdde} provides
a counterexample to Hypothesis~\ref{thm:sz}.  If one extends example
\eqref{eq:cdde} by a decoupled equation that has its Floquet
multiplier at $1/\mu_0$ (for example, $\dot y=-[\log\mu_0]y$) then it
also contradicts the hypothesis as stated in \cite{SSH06}.

As we have discussed in Section~\ref{sec:constr}, one finds that
$\mu=1/\lambda$ is a pole of $\Delta(\mu)$ if $\lambda$ is a non-zero
eigenvalue of the operator $[\id-M_1(0)]^{-1}L$ where $M_1$
and $L$ are defined in \eqref{eq:varm} and
\eqref{eq:ldef}.  Thus, whenever $[\id-M_1(0)]^{-1}L$ has a
non-zero spectral radius we must expect that $\Delta(\mu)$ has poles.

\section{The extended characteristic matrix}
\label{sec:fix}
The construction of $\Delta(\mu)$ suffers from the problem that the
poles of $\Delta(\mu)$ may coincide with the inverse of Floquet
multipliers of \eqref{eq:spdde} of interest. A simple extension of the
construction of $\Delta(\mu)$ permits one to push the poles to the
outside of a circle of any desired radius $R$.  Then the
characteristic matrix can be used to find all Floquet multipliers
outside of the ball of radius $1/R$.  We explain the extension in detail
for a single delay $\tau<1$ (the straightforward generalization to the
general case is relegated to Appendix~\ref{sec:gen}).  % We warn that
                                                       % our
                                                       % construction
                                                       % (following
                                                       % below) does
% \emph{not} guarantee that the poles of this extended characteristic
% matrix are uniformly bounded away from the circle of radius $1$ for
% increasing period of $A$ and $B$ \emph{before} rescaling the period to
% $1$.

The idea is based on the observation that the linear part of the
right-hand side in the integral equation formulation
\eqref{eq:varp}--\eqref{eq:varm} of initial-value problem
\eqref{eq:ivp}--\eqref{eq:ic} has a norm less than $1$ if the interval
is sufficiently short (thus, making the fixed point problem
\eqref{eq:varp}--\eqref{eq:varm} uniquely solvable).

Similar to a multiple shooting approach, we partition the interval
$[-1,0]$ into $k$ intervals of size $1/k$:
\begin{equation}\label{eq:jidef}
  J_i=\left[t_i,t_{i+1}\right)=
  \left[-1+\frac{i}{k},-1+\frac{i+1}{k}\right)\mbox{\ for $i=0,\ldots,k-1$.}  
\end{equation}
Then we write a system of $k$ coupled initial value problems for
a vector of $k$ initial values $(v_0,\ldots,v_{k-1})^T\in\C^{nk}$:
\begin{align}
  \label{eq:ivpext}
  \dot x(t)&=A(t)x(t)+
  B(t)
  \begin{cases}
        x(t-\tau) &\mbox{if $t\in[\tau-1,0]$,}\\
    \mu x(1+t-\tau) &\mbox{if  $t\in[-1,\tau-1)$,}
  \end{cases}\\
  \label{eq:icext}
  x(t_i)&=v_i\quad\mbox{for $i=0,\ldots,k-1$}
\end{align}
where $t\in[-1,0]$.  System \eqref{eq:ivpext}--\eqref{eq:icext}
corresponds to a coupled system of $k$ differential equations for the
$k$ functions $x\vert_{J_i}$ in the time intervals $J_i$.  More
precisely, $x$ is defined as a solution of the fixed point problem
\begin{align}
  \label{eq:varpext}
  x(t)&=v_s(t)+\int\limits_{a_k(t)}^tA(s)x(s)+B(s)
    \begin{cases}
      x(s-\tau) &\mbox{if $s\in[\tau-1,0]$,}\\
      \mu x(1+s-\tau) &\mbox{if  $s\in[-1,\tau-1)$}
    \end{cases}\d s\mbox{\ where}\\
  \label{eq:varpextv}
  v_s(t)&=v_i\mbox{\quad if $t\in J_i$, }\\
  \label{eq:varpexta}
  a_k(t)&=t_i\mbox{\quad if $t\in J_i$.}    
\end{align}
\begin{figure}[ht]
  \centering
  \includegraphics[scale=0.85]{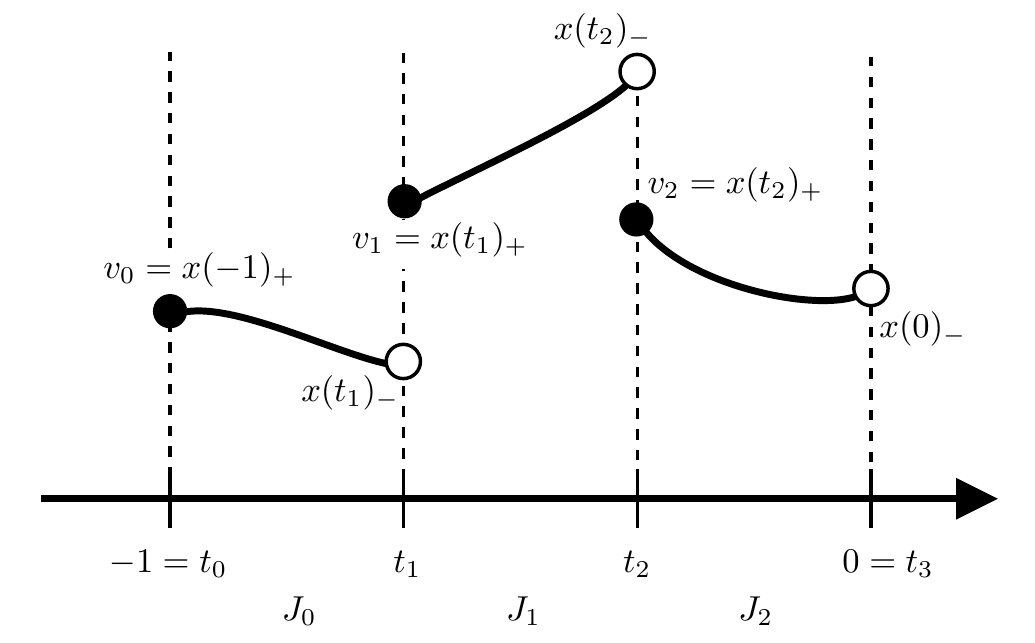}
  \caption{Illustration of solution $x\in C_k$ for multiple-shooting
    problem \eqref{eq:ivpext}--\eqref{eq:icext} with $k=3$
    sub-intervals. The right-sided limits $x(t_i)_+$ are equal to the
    variables $v_i$.  The gaps between the left-sided limits
    $x(t_i)_-$ and $x(t_i)_+$ are zero if $[v_0,\ldots,v_k]$ is in the
    kernel of the characteristic matrix $\Delta_k(\mu)$. The equations
    for $x\vert_{J_i}$ on the sub-intervals are coupled due to the
    terms $t-\tau$ or $1+t-\tau$ in \eqref{eq:ivpext}.}
  \label{fig:msproblem}
\end{figure}
We point out that a solution $x$ of
\eqref{eq:ivpext}--\eqref{eq:icext} is not necessarily in
$C([-1,0];\C^n)$ because it will typically have discontinuities at the
restarting times $t_i$ for $i=1\ldots k-1$ as illustrated in
Fig.~\ref{fig:msproblem} . Thus, we should define the space in which
we look for solutions as the space of \emph{piecewise continuous}
functions:
\begin{align}
  C_k=&\begin{aligned}[t]
  \{x:[-1,0]\mapsto\C^n:&\mbox{\ $x$ continuous on each
    (half-open)
    subinterval $J_i$}\\
  & \mbox{\ and $\lim_{t\nearrow t_i} x(t)$ exists for all $i=1\ldots k$.}\}    
  \end{aligned}\label{eq:ckdef}\\
  C_{k,0}=& \{x\in C_k: x(t_i)=0\mbox{\ for all $i=0\ldots
    k-1$}\}\mbox{.}\label{eq:ck0def}
\end{align}
We call $x\in C_k$ a solution of
\eqref{eq:varpext}--\eqref{eq:varpexta} if it satisfies
\eqref{eq:varpext} for every $t\in[-1,0]$. Both spaces, $C_k$ and
$C_{k,0}$ are equipped with the usual maximum norm.  For example, the
piecewise constant function $v_s$ as defined in \eqref{eq:varpextv} is
an element of $C_k$. Several operations are useful when dealing with
functions in $C_k$ to keep the notation compact.  Any element $x\in
C_k$ has well-defined one-sided limits at the boundaries $t_i$, which
we denote by the subscripts $+$ and $-$:
\begin{align*}
  x(t_i)_-&=\lim_{t\nearrow t_i} x(t)&&\mbox{for $i=1\ldots k$,}\\
  x(t_i)_+&=\lim_{t\searrow t_i} x(t)=x(t_i)&&\mbox{for $i=0\ldots
    k-1$.}
\end{align*}
We also define the four maps
\begin{equation}
  \label{eq:opsdef}
\begin{aligned}
  S&:\C^{nk}\mapsto C_k &&S[v_0\ldots v_{k-1}]^T(t)=v_i
  \mbox{\ if $t\in[t_i,t_{i+1})$ for $i=0\ldots k-1$,}\\
  \Gamma_+&:C_k\mapsto \C^{nk} &&
  \Gamma_+[x(\cdot)]=\left[x(-1)_+,x(t_1)_+,\ldots, 
    x(t_{k-1})_+\right]^T\mbox{,}\\
  \Gamma_-(\mu)&:C_k\mapsto \C^{nk} && 
  \Gamma_-(\mu)[x(\cdot)]= 
  \left[\mu x(0)_-,x(t_1)_-,\ldots, x(t_{k-1})_-\right]^T\mbox{,}\\
  M_k(\mu)&:C_k\mapsto C_{k,0} &&\mbox{\ where\ }\\
  &\!\!\!\left[M_k(\mu)x\right](t)=&&\int\limits_{a_k(t)}^tA(s)x(s)+B(s)    
  \begin{cases}
    x(s-\tau) &\mbox{if $s\in[\tau-1,0]$,}\\
    \mu x(1+s-\tau) &\mbox{if  $s\in[-1,\tau-1)$}
  \end{cases}\d s
\end{aligned}
\end{equation}
(the piecewise constant function $a_k$ was defined in
\eqref{eq:varpexta} as $a_k(t)=t_i$ if $t\in [t_i,t_{i+1})$). The map
$S$ takes a tuple of vectors $v_0,\ldots v_{k-1}$ and maps it to a
piecewise constant function, assigning $x(t_i)=v_i$ and then extending
with a constant to the subinterval $J_i$. For example, the function
$v_s$ in \eqref{eq:varpext}--\eqref{eq:varpextv} is equal to $Sv$. The
map $\Gamma_+$ takes the right-side limits of a piecewise continuous
function (thus, $\Gamma_+S$ is the identity in $\C^{nk}$). The map
$\Gamma_-(\mu)$ takes the left-side limits at the interior boundaries
$t_i$ ($i\geq1$), and in its first component it takes the left-side
limit at $t_k=0$ (the end of the interval) multiplied by $\mu$. The
map $M_k(\mu)$ is the generalization of $M_1$ defined in
\eqref{eq:varm}: in the definition of $M_k$ the lower boundary in the
integral is not $-1$ but $a_k(t)$ such that we can estimate its norm:
\begin{equation}
  \label{eq:mknorm}
  \|M_k(\mu)\|_\infty\leq\frac{1}{k}\left[\max_{t\in[-1,0]} \|A(t)\|_\infty+|\mu| 
    \max_{t\in[0,1]} \|B(t)\|_\infty\right]=:\frac{C_*(|\mu|)}{k}\mbox{.}
\end{equation}
According to the Theorem of Arzel{\'a}-Ascoli (see \cite{DS58a}, page
266) the operator $M_k(\mu)$ is also compact because it maps any
bounded set into a set with uniformly bounded Lipschitz
constants. Using the above notation we can write the differential
equation \eqref{eq:ivpext}--\eqref{eq:icext} (or, rather, the
corresponding integral equation \eqref{eq:varpext}) as a fixed point
problem in $C_k$. For a given $v=[v_0,\ldots,v_{k-1}]\in\C^{nk}$ and
$\mu\in\C$ we find a function $x\in C_k$ such that
\begin{equation}
  \label{eq:fixpp}
  x=Sv+M_k(\mu)x\mbox{.}
\end{equation}
Due to the norm estimate \eqref{eq:mknorm} on $M_k(\mu)$ we can state
the following fact about the existence of a fixed point $x$ of
\eqref{eq:fixpp} for a given $v$ and $\mu$.
\begin{lemma}\label{thm:uniquext}
  Let $R\geq1$ be given.  If we choose $k>C_*(R)$ then the
  system \eqref{eq:ivpext}--\eqref{eq:icext} has a unique solution
  $x\in C_k$ for all initial values $(v_0\ldots v_{k-1})^T$ and all
  $\mu$ satisfying $|\mu|<R$.
\end{lemma}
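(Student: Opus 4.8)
The plan is to read the defining fixed point problem \eqref{eq:fixpp}, $x=Sv+M_k(\mu)x$, as the statement that $x$ is a fixed point of the affine map $x\mapsto Sv+M_k(\mu)x$ on the Banach space $C_k$, and to show that this map is a contraction whenever $k>C_*(R)$ and $|\mu|<R$. Since $Sv\in C_k$ and $M_k(\mu)$ maps $C_k$ into $C_{k,0}\subseteq C_k$, the map is a genuine affine self-map of $C_k$, so the Banach fixed point theorem applies once the Lipschitz constant is under control.

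First I would exploit that $C_*$ is monotone. From its definition in \eqref{eq:mknorm} the quantity $C_*(|\mu|)=\max_{t}\|A(t)\|_\infty+|\mu|\max_{t}\|B(t)\|_\infty$ is nondecreasing in $|\mu|$, so $|\mu|<R$ gives $C_*(|\mu|)\leq C_*(R)$. Combined with the operator-norm estimate \eqref{eq:mknorm} this yields the uniform bound
\[
  \|M_k(\mu)\|_\infty\leq\frac{C_*(|\mu|)}{k}\leq\frac{C_*(R)}{k}<1
\]
for every $\mu$ with $|\mu|<R$, the strict inequality being exactly the hypothesis $k>C_*(R)$. The essential feature is that this bound is \emph{uniform} in $\mu$ on the disc $|\mu|<R$, not merely pointwise.

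With $q:=C_*(R)/k<1$ the affine map $x\mapsto Sv+M_k(\mu)x$ is a $q$-contraction on $C_k$, since by linearity of $M_k(\mu)$ one has $\|M_k(\mu)x-M_k(\mu)\tilde x\|_\infty\leq q\|x-\tilde x\|_\infty$ for all $x,\tilde x\in C_k$. The Banach fixed point theorem then furnishes a unique fixed point $x\in C_k$, which can be written explicitly through the Neumann series $x=\sum_{n\geq0}M_k(\mu)^nSv=(\id-M_k(\mu))^{-1}Sv$, convergent in operator norm because $\|M_k(\mu)\|_\infty\leq q<1$. A fixed point of \eqref{eq:fixpp} is by definition a solution of the integral equation \eqref{eq:varpext}, hence of the initial-value problem \eqref{eq:ivpext}--\eqref{eq:icext}, so both existence and uniqueness of the solution follow at once.

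I expect no serious obstacle: the argument is a standard contraction/Neumann-series estimate, and the only point requiring genuine care is the uniformity of the norm bound over the entire disc $|\mu|<R$, which is secured by the monotonicity of $C_*$ rather than by any $\mu$-dependent adjustment of $k$. A minor bookkeeping point is to confirm that $M_k(\mu)$ is a self-map of $C_k$ (its range lies in $C_{k,0}$) and that $Sv$ indeed lies in $C_k$, both of which are already recorded in the definitions \eqref{eq:opsdef} and in the discussion surrounding \eqref{eq:varpextv}.
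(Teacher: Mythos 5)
Your proposal is correct and follows essentially the same route as the paper: the paper also passes to the fixed point problem \eqref{eq:fixpp}, uses the norm bound \eqref{eq:mknorm} (with the same implicit monotonicity $C_*(|\mu|)\leq C_*(R)<k$) to conclude that $\id-M_k(\mu)$ is invertible, and writes the solution as $[\id-M_k(\mu)]^{-1}Sv$. Your explicit contraction-mapping/Neumann-series phrasing is just an unpacked version of the paper's one-line invertibility argument.
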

\begin{proof} The differential equation
  \eqref{eq:ivpext}--\eqref{eq:icext} is equivalent to fixed point
  problem \eqref{eq:fixpp}. The norm estimate \eqref{eq:mknorm}
  implies that $\id-M_k(\mu)$ is invertible such that the solution $x$
  of \eqref{eq:fixpp} is given by $[\id-M_k(\mu)]^{-1}Sv$.
\end{proof}

Lemma~\ref{thm:uniquext} implies that for $\mu$ satisfying
$C_*(|\mu|)<k$ the operator $\id-M_k(\mu)$ is an isomorphism on $C_k$.
\subsection{Extended characteristic matrix}
We can use Lemma~\ref{thm:uniquext} to define the \emph{extended}
characteristic matrix $\Delta_k(\mu)$ for $\mu$ satisfying
$C_*(|\mu|)<k$. This is a matrix in $\C^{(nk)\times(nk)}$ and is
defined as
\begin{equation}\label{eq:deltaedef}
  \Delta_k(\mu)v=\Delta(\mu)
\begin{bmatrix}
  v_0\\ \vdots\\ v_{k-1}  
\end{bmatrix}=
\begin{bmatrix}
  v_{0}-\mu x(0)\\
  v_1-x(t_1)_-\\
  \vdots\\
  v_{k-1}-x(t_{k-1})_-
\end{bmatrix}
\end{equation}
where $x=\left[\id-M_k(\mu)\right]^{-1}Sv\in C_k$ is the unique
solution of the coupled system of initial-value problems,
\eqref{eq:ivpext}--\eqref{eq:icext}, with initial condition
$v=(v_0,\ldots,v_{k-1})^T\in\C^{nk}$.  The first row in the right side
of definition \eqref{eq:deltaedef} corresponds to the boundary
condition \eqref{eq:bc}. The other $k-1$ rows, $v_i-x(t_i)_-$,
guarantee for $v\in\ker\Delta(\mu)$ that the right and the left limit
of $x$ at the restarting times $t_i$ agree, making the gaps shown in
Fig.~\ref{fig:msproblem} zero. Thus, $x=[\id-M_k(\mu)]Sv$ is not only
in $C_k$ but in $C([-1,0];\C^n)$ if $\Delta_k(\mu)v=0$ . What is more, if
$\Delta_k(\mu)v=0$ then the right-hand side of \eqref{eq:ivpext} for
the solution $x=[\id-M_k(\mu)]^{-1}Sv$ is continuous such that $x$ is
continuously differentiable and satisfies for every $t$ the
differential equation \eqref{eq:bvp}--\eqref{eq:bc}, which defines the
eigenvalue problem for $T$.

Let us list several useful facts about $C_k$, $C_{k,0}$,
$\Delta_k(\mu)$, $T$ and the quantities defined in \eqref{eq:opsdef}.
\begin{remunerate}
\item We can express $\Delta_k(\mu)$ using the maps defined in
  \eqref{eq:opsdef}:
  \begin{equation}\label{eq:deltaeexp}
    \Delta_k(\mu)=\id-\Gamma_-(\mu)\left[\id-M_k(\mu)\right]^{-1}S\mbox{.}
  \end{equation}
\item $C_k$ is isomorphic to $\C^{nk}\times C_{k,0}$:
  \begin{equation}
    \label{eq:isock}
    \begin{aligned}
      (v,\phi)&\in\C^{nk}\times C_{k,0}&&\mapsto Sv+\phi\in C_k\\
      \psi&\in C_k &&\mapsto (\Gamma_+\psi,\psi-S\Gamma_+\psi)\in
    \C^{nk}\times C_{k,0}\mbox{.}
    \end{aligned}
  \end{equation}
\item\label{item:dde} The differential equation
  \begin{equation}\label{eq:T0}
    \begin{split}
      \dot x(t)&=A(t)x+ B(t)
      \begin{cases}
        x(t-\tau) &\mbox{if $t\in[\tau-1,0]$,}\\
        0 &\mbox{if  $t\in[-1,\tau-1)$,}
      \end{cases}\\
      x(-1)_+&=0\\
      x(t_i)_+&=x(t_i)_-\quad\mbox{for $i=1,\ldots,k-1$}
    \end{split}
  \end{equation}
  has only the trivial solution (it is identical to
  \eqref{eq:ivp}--\eqref{eq:ic} for $\mu=0$). Its integral formulation
  is $x=S\Gamma_-(0)x+M_k(0)x$. Thus, the operator
  $\id-S\Gamma_-(0)-M_k(0):C_k\mapsto C([-1,0];\C^n)$ (which is of
  Fredholm index $0$) is invertible.
\item The time-$1$ map $T$ of the linear DDE $\dot
  x(t)=A(t)x(t)+B(t)x(t-\tau)$, defined in \eqref{eq:tdef} for initial
  history segments in $x\in C([-1,0];\C^n)$, can be easily extended to
  initial history segments in $C_k$. In fact, the extension of $T$ to
  $C_k$ can be defined by
  \begin{align}\label{eq:texp}\allowdisplaybreaks
    T&=\left[\id-S\Gamma_-(0)-M_k(0)\right]^{-1}
    \left[S(\Gamma_-(1)-\Gamma_-(0))+M_k(1)-M_k(0)\right]\mbox{,}
    \nonumber \intertext{which maps $C_k$ into $C([-1,0];\C^n)$, such
      that} \id-\mu T&=\left[\id-S\Gamma_-(0)-M_k(0)\right]^{-1}
    \left[\id-S\Gamma_-(\mu)-M_k(\mu)\right]\mbox{,}
  \end{align}
  which maps $C_k$ into $C_k$ (see Appendix~\ref{sec:check} for a
  detailed decomposition of expression \eqref{eq:texp}). Since the
  extension of $T$ maps into $C([-1,0];\C^n)$ (the original domain of
  the non-extended $T$) its spectrum is identical to the spectrum of
  the original $T$.
\end{remunerate}

Lemma~\ref{thm:uniquext} and the above facts permit us to apply the
general theory developed in \cite{KL92}, relating the extended
characteristic matrix $\Delta_k(\mu)\in\C^{(nk)\times(nk)}$ to
spectral properties of the time-$1$ map $T$: eigenvalues,
eigenvectors, and the length of their Jordan chains.
\begin{lemma}[Jordan chains]
  \label{thm:multiplicity}
  Assume that $0<|\mu_*|<R$ and $k>C_*(R)$ where $C_*(R)$ is defined
  by \eqref{eq:mknorm}. If $\Delta_k(\mu_*)$ is regular then $1/\mu_*$ is in
  the resolvent set of the monodromy operator $T$. If $\Delta_k(\mu_*)$
  is not regular then $\lambda_*=1/\mu_*$ is an eigenvalue of $T$. The
  Jordan chain structure for $\lambda_*$ as an eigenvalue of $T$ is
  also determined by $\Delta_k$\textnormal{:}
  \begin{remunerate}
  \item the dimension $l_0$ of $\ker\Delta_k(\mu_*)$ is the geometric
    multiplicity of $\lambda_*$,
  \item the order $p$ of the pole $\mu_*$ of $\Delta_k(\mu)^{-1}$ is the length
    of the longest Jordan chain associated to $\lambda_*$,
  \item the order of $\mu_*$ as a root of $\det\Delta_k(\mu)$ is the
    algebraic multiplicity of $\lambda_*$,
  \item the Jordan chains of length less than or equal to $p$ can be
    found as solutions $(y_0,\ldots, y_{p-1})$ (where $y_{p-1}$ is
    non-zero) of the linear system
    \begin{equation}\label{eq:jcderiv}
      \begin{split}
        0&=\Delta_k^{(0)}y_0\\
        0&=\Delta_k^{(1)}y_0+\Delta_k^{(0)}y_1\\
        0&=\frac{\Delta_k^{(2)}}{2}y_0+\Delta_k^{(1)}y_1+\Delta_k^{(0)}y_2\\
        \vdots&\\
        0&=\frac{\Delta_k^{(p-1)}}{(p-1)!}y_0+\ldots+\Delta_k^{(1)}y_{p-2}+
        \Delta_k^{(0)}y_{p-1}
      \end{split}
    \end{equation}
    where $\Delta_k^{(j)}$ is the $j$th derivative of
    $\Delta_k(\mu)$ in $\mu_*$ and
    $\Delta_k^{(0)}=\Delta_k(\mu_*)$.
  \end{remunerate}
\end{lemma}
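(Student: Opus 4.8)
The plan is to show that the matrix function $\mu\mapsto\Delta_k(\mu)$ is, on the punctured disc $0<|\mu_*|<R$, equivalent (in the sense of \cite{KL92}) to the operator function $\mu\mapsto\id-\mu T$, and then to invoke the general theory of \cite{KL92} to transfer all the spectral invariants from $T$ to $\Delta_k$. The starting point is identity \eqref{eq:texp}, which writes $\id-\mu T$ as the fixed operator $[\id-S\Gamma_-(0)-M_k(0)]^{-1}$ — invertible by item~\ref{item:dde} — composed with $W(\mu):=\id-S\Gamma_-(\mu)-M_k(\mu)$. Since that prefactor is constant in $\mu$, hence trivially analytic and invertible, $\id-\mu T$ and $W(\mu)$ are equivalent operator functions on $C_k$, so it remains only to connect $W(\mu)$ with $\Delta_k(\mu)$.

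First I would write $W(\mu)=G(\mu)-S\Gamma_-(\mu)$ with $G(\mu)=\id-M_k(\mu)$, which is analytic and invertible for $|\mu_*|<R$ and $k>C_*(R)$ by Lemma~\ref{thm:uniquext} (so that $G(\mu)^{-1}$ is itself analytic). The central algebraic step is a Schur-complement factorization of the bordered operator
\[
  \mathcal{B}(\mu)=\begin{pmatrix} G(\mu) & S\\ \Gamma_-(\mu) & \id\end{pmatrix}
  \quad\text{on } C_k\oplus\C^{nk}.
\]
Eliminating once with the invertible block $G(\mu)$ and once with the invertible block $\id$ exhibits $\mathcal{B}(\mu)$ as a product of unipotent (hence invertible) analytic factors times $\operatorname{diag}(G(\mu),\Delta_k(\mu))$ in the first case and times $\operatorname{diag}(W(\mu),\id)$ in the second, where the Schur complement $\id-\Gamma_-(\mu)G(\mu)^{-1}S$ is exactly $\Delta_k(\mu)$ of \eqref{eq:deltaeexp}. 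Equating the two products and absorbing the analytic invertible factor $\operatorname{diag}(G(\mu)^{-1},\id)$ yields that $\id_{C_k}\oplus\Delta_k(\mu)$ is equivalent, through analytic invertible operator functions, to $W(\mu)\oplus\id$ on $C_k\oplus\C^{nk}$. As the bordering identity blocks are invertible everywhere, the local spectral data of $\id_{C_k}\oplus\Delta_k$ at $\mu_*$ coincides with that of $\Delta_k$, and that of $W\oplus\id$ with that of $W$; combined with the previous paragraph, $\Delta_k(\mu)$ is thus equivalent to $\id-\mu T$ near every $\mu_*$ with $0<|\mu_*|<R$.

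It remains to read off the five assertions. Writing $\lambda=1/\mu$ we have $\id-\mu T=\mu(\lambda\id-T)$, and since $\mu$ is a nonvanishing scalar and $\mu\mapsto1/\mu$ is biholomorphic near $\mu_*\neq0$, the equivalence above identifies, with matching orders, the local spectral structure of $\lambda\id-T$ at $\lambda_*=1/\mu_*$ with that of $\Delta_k(\mu)$ at $\mu_*$. The theory of \cite{KL92} then gives directly that $\Delta_k(\mu_*)$ is regular iff $\lambda_*$ lies in the resolvent set of $T$, that $\dim\ker\Delta_k(\mu_*)$ equals the geometric multiplicity of $\lambda_*$, that the pole order of $\Delta_k(\mu)^{-1}$ at $\mu_*$ equals the length of the longest Jordan chain, and that the zero order of $\det\Delta_k(\mu)$ at $\mu_*$ equals the algebraic multiplicity — using that equivalence preserves the partial multiplicities of the local Smith form, whose sum equals both the order of the determinant zero and the algebraic multiplicity of the compact operator $T$ at its isolated, finite-multiplicity eigenvalue $\lambda_*$. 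Finally, for item~(iv) I would note that \eqref{eq:jcderiv} is precisely the defining linear system for a root function of the analytic matrix function $\Delta_k$ at $\mu_*$ (vanishing of the Taylor coefficients of $\Delta_k(\mu)\varphi(\mu)$ up to order $p-1$), so that the bijection between root functions induced by the equivalence carries the Jordan chains of $T$ for $\lambda_*$ onto the solution vectors $(y_0,\ldots,y_{p-1})$ of \eqref{eq:jcderiv}.

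The main obstacle I anticipate lies in the transfer step rather than in the factorization: one must verify carefully that extending by the invertible identity blocks alters none of the partial multiplicities, and that the change of variables $\lambda=1/\mu$ together with the scalar factor $\mu$ neither creates nor destroys zeros, poles, or chain length at $\mu_*\neq0$. The Schur-complement factorization itself is routine once $G(\mu)^{-1}$ is known to be analytic, which Lemma~\ref{thm:uniquext} guarantees.
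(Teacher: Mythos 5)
Your proposal is correct, and although it rests on the same foundation as the paper --- the equivalence framework of \cite{KL92} applied to $\id-\mu T$ via \eqref{eq:texp} and to $\Delta_k(\mu)$ via \eqref{eq:deltaeexp} --- the mechanism by which you establish the equivalence is genuinely different. The paper chooses the isomorphism $E(\mu)[v,\phi]=[\id-M_k(\mu)]^{-1}[Sv+\phi]$ between $\C^{nk}\times C_{k,0}$ and $C_k$, derives the unique compatible $F(\mu)$ of \eqref{eq:efdef}, and then verifies $F(\mu)G(\mu)E(\mu)=H(\mu)$ and the invertibility of $F$ by the hands-on computations of Appendix~\ref{sec:check}; its $H$ realizes the one-sided extension $\Delta_k\oplus\id_{C_{k,0}}$. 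You instead extract both $\Delta_k(\mu)$ and $W(\mu)=\id-S\Gamma_-(\mu)-M_k(\mu)$ as the two Schur complements of a single bordered operator on $C_k\oplus\C^{nk}$; the block algebra checks out, since $W(\mu)+S\Gamma_-(\mu)=\id-M_k(\mu)$ and $\Gamma_-(\mu)[\id-M_k(\mu)]^{-1}S+\Delta_k(\mu)=\id$, the triangular factors are automatically invertible and analytic once Lemma~\ref{thm:uniquext} gives analyticity of $[\id-M_k(\mu)]^{-1}$ on $|\mu|<R$, and the constant prefactor $\id-S\Gamma_-(0)-M_k(0)$ that you strip off is indeed an isomorphism of $C_k$ by fact~\ref{item:dde} (trivial kernel from \eqref{eq:T0} plus Fredholm index $0$). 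What your route buys is that nothing has to be guessed or verified by hand: the content of Appendix~\ref{sec:check} collapses into mechanical block-matrix manipulation. What it costs is, first, that your equivalence is ``after extension'' on both sides, so you must invoke the (standard, and correctly flagged by you) fact that padding with identity blocks alters no local spectral data --- your observation that Jordan chains of $A\oplus\id$ are exactly chains of $A$ padded by zeros settles this --- and, second, that the chain correspondence in item (iv) stays abstract: the paper's explicit $E(\mu)$ is precisely what produces the concrete translation formulas \eqref{eq:jctranslation} between the vectors $y_j\in\C^{nk}$ and the generalized eigenvectors of $T$, which the discussion surrounding the lemma promises; to recover them from your factorization one would have to unwind the product of triangular factors and the prefactor. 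These are presentational rather than mathematical costs --- the lemma as stated does follow from your argument.
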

System~\eqref{eq:jcderiv} is equivalent to the requirement
\begin{displaymath}
  \Delta_k(\mu)\left[y_0+(\mu-\mu_*)y_1+\ldots+
    (\mu-\mu_*)^{p-1}y_{p-1}\right]=
  O\left((\mu-\mu_*)^p\right)
\end{displaymath}
for all $\mu\approx \mu_*$ (this is, in fact, the general definition of
a Jordan chain for a nonlinear eigenvalue problem).  The vectors $y_j$
in the Jordan chains found as solutions of \eqref{eq:jcderiv} are
still vectors in $\C^{nk}$. We give the precise relation between the
vectors $y_j$ and the generalized eigenvectors of $\id-\mu T$ in
Equation~\eqref{eq:jctranslation} in the proof below.

\begin{proof}
The proof extends and modifies the
construction used by \cite{SSH06}, and makes the general theory of
\cite{KL92} applicable to time-periodic delay equations.

First, we give a brief summary of the relevant statement of \cite{KL92}. Let
$G:\Omega\mapsto{\cal L}(X_1;Y_1)$ and $H:\Omega\mapsto{\cal
  L}(X_2;Y_2)$ be two functions which are holomorphic on the open
subset $\Omega$ of the complex plane $\C$, and map into the space of
bounded linear operators from one Banach space $X_{1,2}$ into another
Banach space $Y_{1,2}$. The functions $G$ and $H$ are called
\emph{equivalent} if there exist two functions $E:\Omega\mapsto{\cal
  L}(X_2;X_1)$ and $F:\Omega\mapsto{\cal L}(Y_1;Y_2)$ (also
holomorphic on $\Omega$) whose values are isomorphisms such that
\begin{displaymath}
  H(\mu)=F(\mu)G(\mu)E(\mu)\mbox{\ for all $\mu\in\Omega$.}
\end{displaymath}
If two operators $G$ and $H$ are equivalent then every Jordan chain
$(u_0,\ldots,u_{p-1})$ of the eigenvalue problem $H(\mu)u=0$ for
$\mu_*$ corresponds to exactly one Jordan chain $(v_0,\ldots,v_{p-1})$
of the eigenvalue problem $G(\mu)v=0$ at $\mu_*$ via the isomorphism
\begin{equation}\label{eq:gen:jctranslation}
  \begin{split}
    v_0&=E_0u_0\\
    v_1&=E_0u_1+E_1u_0\\
    \vdots&\\
    v_{p-1}&=E_0u_{p-1}+\ldots +E_{p-1}u_0\mbox{,}
  \end{split}
\end{equation}
where the operators $E_j$ are the expansion coefficients of the
isomorphism $E$ in $\mu=\mu_*$:
\begin{equation}\label{eq:eexpand}
  E(\mu)=E_0+(\mu-\mu_*)E_1+\ldots
  +(\mu-\mu_*)^{p-1}E_{p-1}+O((\mu-\mu_*)^p)\mbox{.}
\end{equation}
We construct the equivalence initially only for the case of a single
delay $\tau<1$.  The necessary modifications for arbitrary delays can
be found in Appendix~\ref{sec:gen} (the underlying idea is identical
but there is more notational overhead). Also, we restrict ourselves
here to merely stating what the operators $H$, $F$, $G$ and $E$ are,
relegating the detailed checks and calculations to
Appendix~\ref{sec:check}.

The subset of permissible $\mu$, $\Omega\subset\C$, is the ball of
radius $R$ around $0$, where the initial-value problem
\eqref{eq:ivpext}--\eqref{eq:icext} has a unique solution.  In our
equivalence the operator functions $G$ and $H$ are
\begin{align*}
  G(\mu)&= \id-\mu T:&  X_1&=C_k && \mapsto & 
  Y_1&=X_1\mbox{,}\\
  H(\mu)&=
  \begin{pmatrix}
    \Delta_k(\mu) & 0\\ 0& \id
  \end{pmatrix}: & X_2&=\C^{nk}\times C_{k,0}&&\mapsto&
  Y_2&=X_2 \mbox{.}
\end{align*}
Note that the spaces $X_1$ and $X_2$ are isomorphic via relation
\eqref{eq:isock}. The eigenvalue problem $G(\mu)v=0$ is the eigenvalue
problem for $T$ (reformulated for $\mu=\lambda^{-1}$). The eigenvalue
problem $H(\mu)[v,\phi]=0$ is equivalent to $\Delta_k(\mu)v=0$,
$\phi=0$. Thus, establishing equivalence between $G$ and $H$ proves
Lemma~\ref{thm:multiplicity}.  We define the isomorphisms $E$ and $F$
as
\begin{equation}
  \label{eq:efdef}
  \begin{split}
  E(\mu)[v,\phi]&=\left[\id-M_k(\mu)\right]^{-1}\left[Sv+\phi\right]
  \\
  F(\mu)\psi&=
  \begin{bmatrix}
    F_1(\mu)\psi\\ F_2(\mu)\psi
  \end{bmatrix}\\
  &=
  \begin{bmatrix}
    \left[\Gamma_+-\Gamma_-(0)\right]\psi+
    \Gamma_-(\mu)\left[\id-M_k(\mu)\right]^{-1}
    \left[\id-S\Gamma_+-M_k(0)\right]\psi\\
    \left[\id-S\Gamma_+-M_k(0)\right]\psi
  \end{bmatrix}% \\
%    E^{-1}(\mu)\psi&=
%    \begin{bmatrix}
%      \Gamma_+\psi\\ \left[\id-S\Gamma_+-M_k(\mu)\right]\psi
%    \end{bmatrix}\\
%    F^{-1}(\mu)[v,\phi]&=
%    \left[\id-S\Gamma_-(0)-M_k(0)\right]^{-1}
%    \left[Sv+\phi-S\left[\id-M_k(\mu)\right]^{-1}\phi\right]
\end{split}
\end{equation}
See \eqref{eq:deltaeexp} and \eqref{eq:texp} for expressions giving
$\Delta_k(\mu)$ and $\id-\mu T$, defining $G$ and $H$ in terms of $S$,
$\Gamma_\pm$ and $M_k$. The inverse of $\id-M_k(\mu)$ is guaranteed to
exist by Lemma~\ref{thm:uniquext}. The only choice we make is the
definition of $E$, which is clearly an isomorphism from $X_2$ to $X_1$
(because $(v,\phi)\mapsto Sv+\phi$ is an isomorphism from $X_2$ to
$X_1$). The definition of $F$ is then uniquely determined if we want
to achieve the equivalence $F(\mu)G(\mu)E(\mu)=H(\mu)$, and follows
from a straightforward but technical calculation, given in
Appendix~\ref{sec:check}. The concrete form of $E$ and $F$ allows us
to specify the precise relation between Jordan chains of
$\Delta_k(\mu)y=0$ and $(\id-\mu T)x=0$: $(x_0,\ldots,x_{p-1})\in
C([-1,0];\C^n)^p$ is a Jordan chain of $(\id-\mu T)x=0$ in $\mu=\mu_*$
if and only if
\begin{equation}\label{eq:jctranslation}
  \begin{aligned}
    x_0=&E_0[y_0,0]\mbox{,} &
    y_0=&\Gamma_+x_0\\
    x_1=&E_0[y_1,0]+E_1[y_0,0]\mbox{,} &
    y_1=&\Gamma_+x_1-\Gamma_+E_1[y_0,0]\\
    \vdots&&\vdots\\
    x_{p-1}=&E_0[y_{p-1},0]+\ldots+E_{p-1}[y_0,0]\mbox{,}  &
    y_{p-1}=&\Gamma_+x_{p-1}-\Gamma_+E_1[y_{p-2},0]-
    \ldots\\
    & &&
    -\Gamma_+E_{p-1}[y_0,0]
    \mbox{,}
  \end{aligned}
\end{equation}
where $(y_0,\ldots,y_{p-1})$ is a Jordan chain of $\Delta_k(\mu)$ in
$\mu=\mu_*$ and $E_j$ are the expansion coefficients of $E(\mu)$ in
$\mu_*$ (see \eqref{eq:eexpand}). In relation~\eqref{eq:jctranslation}
we used that the finite-dimensional component of $E(\mu)^{-1}x$ is
$\Gamma_+x$ (see \eqref{eq:einvdef} in the Appendix).
\end{proof}

\section{Conclusions}
\label{sec:conc}

We have generalized the construction proposed by Szalai et al.\
\cite{SSH06} such that it can be used to find a low-dimensional
characteristic matrix for any linear DDEs with time-periodic
coefficients. 

A shortcoming of our construction is that the function
$\det\Delta_k(\mu)$ does not depend smoothly on the delay $\tau$. This
problem is purely due to the particulars of our construction because
one expects that the roots of $\det\Delta_k(\cdot)$ depend smoothly on
$\tau$. One can remedy this problem by constructing a characteristic
matrix $\Delta_k(\cdot)$ that depends on the Floquet \emph{exponents}
instead of the Floquet multipliers. Then one can operate in the space
$C^k_\mathrm{per}([-1,0];\C^n)$ of $k$ times continuously
differentiable functions on $[-1,0]$ that are periodic in all
derivatives up to order $k$. One also has to choose a map $S:\C^{nk}$
that maps into $C^k_\mathrm{per}([-1,0];\C^n)$, which requires a
modification of the construction of the fixed point problem
\eqref{eq:varpext}--\eqref{eq:varpexta}.

An open question is how our extended matrix is related to the
characteristic matrix introduced in the textbook of Hale and
Verduyn-Lunel \cite{HL93} for periodic DDEs with delay identical to
the period. In particular, what happens to the poles of $\Delta(\mu)$
as the delay approaches $1$ (the period)?

\appendix

\section{Details of the equivalence in the proof of
  Lemma~\ref{thm:multiplicity}}
\label{sec:check}
\paragraph*{Inverse of $E(\mu)$}
Let $[v,\phi]=[(v_0,\ldots,v_{k-1})^T,\phi]\in\C^{nk}\times C_{k,0}$
be an element of the space $X_2$. Then the definition of
$x=E(\mu)[v,\phi]$ in \eqref{eq:efdef} means that $x$ is the solution
of the fixed point problem
\begin{equation}
  \label{eq:efixpp}
  x=Sv+\phi+M_k(\mu)x\mbox{.}
\end{equation}
Consequently, we can recover $v$ by applying $\Gamma_+$ to
\eqref{eq:efixpp} ($\Gamma_+\phi=0$ since $\phi\in C_{k,0}$ and
$\Gamma_+M_K(\mu)=0$ since $M_k$ maps into $C_{k,0}$). Then $\phi$ can
be recovered as $\phi=x-S\Gamma_+x-M_k(\mu)x$ such that the inverse of
$E$ is
\begin{equation}\label{eq:einvdef}
   E^{-1}(\mu)x=
   \begin{bmatrix}
     \Gamma_+x\\ [\id-S\Gamma_+-M_k(\mu)]x
   \end{bmatrix}\mbox{.}
\end{equation}

\paragraph*{Expression \eqref{eq:texp} for $\id-\mu T$}
Before finding an expression for $F(\mu)$ we check that
$G(\mu)=\id-\mu T$ is indeed given by the expression \eqref{eq:texp}.
If we denote the image of $x$ under $G(\mu)$ by $y$ then $y$ has the
form $x-\tilde y$ where $\tilde y=\mu Tx$. By definition
\eqref{eq:tdef} of the monodromy operator $T$, if $x\in
C([-1,0];\C^n)$ then $\tilde y$ is the unique solution of the
inhomogeneous initial-value problem on $[-1,0]$
\begin{align}
  &\begin{aligned}
    \Dot{\tilde{y}}(t)&= A(t)\tilde y(t)+B(t)
    \begin{cases}
      \tilde y(t-\tau)&\mbox{if  $t\in[\tau-1,0]$}\\
      \mu x(1+t-\tau) &\mbox{if $t\in[-1,\tau-1)$}
    \end{cases}\label{eq:tde}
  \end{aligned}\\
  &  \begin{aligned}
    \tilde y(-1)&=\mu x(0)\\
    \tilde y(t_i)+&=\tilde y(t_i)_-\mbox{\ for $i=1\ldots k-1$.}      
    \end{aligned}\label{eq:tic}
\end{align}
This solution $\tilde y$ is also in $C([-1,0];\C^n)$. We note that the
initial conditions in \eqref{eq:tic} mean that $\Gamma_+\tilde
y=\Gamma_-(0)\tilde y+[\Gamma_-(\mu)-\Gamma_-(0)]x$. Thus, the fixed
point formulation corresponding to \eqref{eq:tde}--\eqref{eq:tic} is
\begin{equation}
  \label{eq:tvoc}
  \tilde y=S\left[\Gamma_-(0)\tilde y+[\Gamma_-(\mu)-\Gamma_-(0)]x\right]+
  M_k(0)\tilde y+[M_k(\mu)-M_k(0)]x\mbox{,}
\end{equation}
which can be rearranged for $\tilde y$ because the factor
$\id-S\Gamma_-(0)-M_k(0)$ (which is in front of $y$) is invertible as
explained below \eqref{eq:T0} in the list of useful facts in
\S\ref{sec:fix}:
\begin{equation}
  \label{eq:muTx}
  \tilde y=\left[\id-S\Gamma_-(0)-M_k(0)\right]^{-1}
  \left[S\Gamma_-(\mu)-S\Gamma_-(0)+M_k(\mu)-M_k(0)\right]x\mbox{.}
\end{equation}
We notice that the operator in front of $x$ is also defined for all
$x\in C_k$, which allows us to extend the definition of $x\mapsto
\tilde y=\mu Tx$ to $x\in C_k$.  Moreover, when applying $\Gamma_+$ to
\eqref{eq:tvoc} we observe that $\tilde y$ is continuous in $[-1,0]$,
that is, the map $x\mapsto \mu Tx$ maps $C_k$ into
$C([-1,0];\C^n)$. Inserting $\tilde y=x-y$ into \eqref{eq:muTx} gives
expression \eqref{eq:texp}, which is equivalent to the fixed point
problem
\begin{equation}
  \label{eq:tfixpp}
  y=S\Gamma_-(0)y+M_k(0)y+x-S\Gamma_-(\mu)x-M_k(\mu)x\mbox{.}
\end{equation}

\paragraph*{Determination of $F(\mu)$}
From the fixed point equation \eqref{eq:tfixpp} it
becomes clear how to choose the other isomorphism $F(\mu)$ such that
$F(\mu)G(\mu)E(\mu)[v,\phi]=H(\mu)[v,\phi]$.  We observe that
\eqref{eq:tfixpp} implies (by applying $\Gamma_+$ to both sides)
\begin{equation}
  \label{eq:gygx}
    \Gamma_+y=\Gamma_-(0)y+\Gamma_+x-\Gamma_-(\mu)x\mbox{.}
\end{equation}
We apply $S$ to this identity and subtract it from the fixed point
equation \eqref{eq:tfixpp} defining $y$:
\begin{displaymath}
  y-S\Gamma_+y=M_k(0)y+x-S\Gamma_+x-M_k(\mu)x\mbox{.}
\end{displaymath}
If $x=E(\mu)[v,\phi]=[\id-M_k(\mu)]^{-1}[Sv+\phi]$ then
\eqref{eq:einvdef} implies $\Gamma_+x=v$, such that we obtain for
$y=G(\mu)x$
\begin{equation}\label{eq:f2def}
  y-S\Gamma_+y-M_k(0)y=\phi\mbox{.}
\end{equation}
Thus, $[\id-S\Gamma_+-M_k(0)]G(\mu)E(\mu)[v,\phi]=\phi$ for all
$(v,\phi)\in X_1$, which justifies our choice of the second component
of $F(\mu)$ in \eqref{eq:efdef}.  Inserting the expression
$E(\mu)[v,\phi]=[\id-M_k(\mu)]^{-1}[Sv+\phi]$ for $x$ and
$\Gamma_+x=v$ into \eqref{eq:gygx} we obtain that
\begin{align}
    \Gamma_+y-\Gamma_-(0)y&=v-\Gamma_-(\mu)[\id-M_k(\mu)]^{-1}[Sv+\phi]
    \label{eq:gyvphi1}\\
    &=[\id-\Gamma_-(\mu)[\id-M_k(\mu)]^{-1}S]v-
    \Gamma_-(\mu)[\id-M_k(\mu)]^{-1}\phi\nonumber\\
    &=\Delta_k(\mu)v- \Gamma_-(\mu)[\id-M_k(\mu)]^{-1}\phi\mbox{,}
    \label{eq:gyvphi2}
\end{align}
where we used the definition \eqref{eq:deltaeexp} of the
characteristic matrix $\Delta_k(\mu)$.  Since we know that $\phi$ can
be recovered from $y$ via \eqref{eq:f2def} we obtain from
\eqref{eq:gyvphi2} the first component of the definition of $F(\mu)$
in \eqref{eq:efdef}:
\begin{displaymath}
  \Gamma_+y-\Gamma_-(0)y+
  \Gamma_-(\mu)[\id-M_k(\mu)]^{-1}\left[\id-S\Gamma_+-M_k(0)\right]y=
  \Delta_k(\mu)v\mbox{.}
\end{displaymath}

\paragraph*{Inverse of $F(\mu)$}
It remains to be checked that $F$ is invertible if $|\mu|<R$. Given
$v\in\C^{nk}$ and $\phi\in C_{k,0}$ such that
\begin{align}
  v&=\Gamma_+y-\Gamma_-(0)y+
  \Gamma_-(\mu)[\id-M_k(\mu)]^{-1}\phi\mbox{,}\label{eq:vy}\\
  \phi&=y-S\Gamma_+y-M_k(0)y\mbox{,}\label{eq:phiy}
\end{align}
how do we recover $y$? Applying $S$ to \eqref{eq:vy} and adding the result to
\eqref{eq:phiy} gives
\begin{equation}
  \label{eq:vphiy}
  Sv+\phi=S\Gamma_-(\mu)[\id-M_k(\mu)]^{-1}\phi+
  \left[\id-S\Gamma_-(0)-M_k(0)\right]y\mbox{,}
\end{equation}
which is equivalent to \eqref{eq:vy}--\eqref{eq:phiy} because the map
$[v,\phi]\in \C^{nk}\times C_{k,0}\mapsto Sv+\phi\in C_k$ is an
isomorphism. Equation~\eqref{eq:vphiy} can be rearranged for $y$
because $\id-S\Gamma_-(0)-M_k(0)$ is invertible. Hence,
$[v,\phi]\in \C^{nk}\times C_{k,0}$ and $y\in C_k$ satisfy
$[v,\phi]=F(\mu)y$ (that is, system \eqref{eq:vy}--\eqref{eq:phiy}) if
and only if
\begin{displaymath}
  y=\left[\id-S\Gamma_-(0)-M_k(0)\right]^{-1}
  \left[Sv+\phi-S\Gamma_-(\mu)[\id-M_k(\mu)]^{-1}\phi\right]\mbox{.}
\end{displaymath}
Consequently,
\begin{equation}
  \label{eq:finvdef}
  F(\mu)^{-1}[v,\phi]= \left[\id-S\Gamma_-(0)-M_k(0)\right]^{-1}
  \left[Sv+\phi-S\Gamma_-(\mu)[\id-M_k(\mu)]^{-1}\phi\right]\mbox{.}
\end{equation}

\section{Characteristic matrices for general 
  periodic delay equations}
\label{sec:gen}
Consider a linear DDE with coefficients of time period $1$
and with maximal delay less than or equal to an integer  $m\geq 1$:
\begin{equation}
  \label{eq:genlong}
  \dot x(t)=\int_0^m\d_\theta\eta(t,\theta)x(t-\theta)
\end{equation}
where $\eta(t,\theta)$ is bounded, measurable and periodic in its
first argument $t$ (with period $1$), and of bounded variation in its
second argument $\theta$. We choose $\eta(t,\cdot)\in
NBV(\R;\C^{n\times n})$, that is, $\eta(t,\cdot)=0$ on $(-\infty,0]$,
$\eta(t,\cdot)$ is continuous from the right on $(0,m)$, and
$\eta(t,\cdot)$ is constant on $[m,\infty)$. In addition we assume
that the total variations of all $\eta(t,\cdot)$ have a common upper
bound:
\begin{equation}\label{eq:etabound}
  V_0^m\eta(t,\cdot)\leq \bar V
\end{equation}
where $V_0^mf$ is the total variation of $f$ \cite{DGLW95}.  In formulation
\eqref{eq:genlong} the dependent variable is the function $x$ on the
history interval $[-m,0]$.  The time-$1$ map $Tx$ of
\eqref{eq:genlong} for $x\in C([-m,0];\C^n)$ is defined as the
solution $y\in C([-m,0];\C^n)$ of the equation
\begin{align}\label{eq:tgende}
 t&>-1:& \dot y(t)&=\int_0^m\d_\theta\eta(t,\theta)
  \begin{cases}
    y(t-\theta) &\mbox{if $t-\theta\geq -1$}\\
    x(1+t-\theta)&\mbox{if $t-\theta< -1$}
  \end{cases}\\
  \label{eq:tgenshift}
  t&\leq-1: & y(t)&=x(1+t)\mbox{.}
\end{align}
Only the part of $y$ on the interval $(-1,0]$ is the result of a
differential equation (namely \eqref{eq:tgende}). The remainder of
$y$, $y\vert_{[-m,-1]}$, is merely a shift of $x$, given by
\eqref{eq:tgenshift}. Note that we have included the initial condition
for \eqref{eq:tgende} into \eqref{eq:tgenshift}, namely
$y(-1)=x(0)$. Correspondingly, the eigenvalue problem for $T$ reads
\begin{align}\label{eq:tevgende}
 t&>-1:& \dot x(t)&=\int_0^m\d_\theta\eta(t,\theta)
  \begin{cases}
    x(t-\theta) &\mbox{if $t-\theta\geq -1$}\\
    \mu x(1+t-\theta)&\mbox{if $t-\theta< -1$}
  \end{cases}\\
  \label{eq:tevgenshift}
  t&\leq-1: & x(t)&=\mu x(1+t)\mbox{.}
\end{align}
The spaces $C_k$ and $C_{k,0}$, and the maps $S$, $\Gamma_\pm$ and
$M_k(\mu)$ defined in \eqref{eq:ckdef}, \eqref{eq:ck0def} and
\eqref{eq:opsdef} for a single delay can be extended in a
straightforward way. We partition the interval $[-m,0]$ into
sub-intervals of length $1/k$, $J_i=[t_i,t_{i+1})=[-1+i/k,-1+(i+1)/k)$
($i=k-mk\ldots k-1$; note that $i$ can become negative), and define
the spaces
\begin{equation}
  \label{eq:ckgendef}
  \begin{split}
    C_k=&\begin{aligned}[t]
  \{x:[-m,0]\mapsto\C^n:&\mbox{\ $x$ continuous on all
    (half-open)
    subintervals $J_i$,}\\
  & \mbox{\ and $\lim_{t\nearrow t_i} x(t)$ exists for all $i=k-mk+1\ldots k$}\}    
  \end{aligned}\\
  C_{k,0}=&
  \{x\in C_k: x(t_i)=0\mbox{\ for  all $i=0\ldots k-1$}\}\mbox{.}  
\end{split}
\end{equation}
Note that non-negative indices $i$ of $t_i$ correspond to times $t_i$
in the interval $[-1,0]$ whereas negative indices $i$ correspond to
times $t_i$ in the interval $[-m,-1)$. These points of discontinuity
are treated differently in the definition of $C_{k,0}$: to be an
element of $C_{k,0}$ the function $x$ is only required to be zero at
$t_i$ with non-negative $i$. The space $C_k$ consists of piecewise
continuous functions where the points of discontinuity are the times
$t_i\in[-m,0]$. For functions in $C_k$ we define the maps (again
analogous to the definitions in \eqref{eq:opsdef} for the single
delay)
\begin{equation}
  \label{eq:opsgendef}
  \begin{aligned}
    S&: \C^{nk}\mapsto C_k &&S[v_0\ldots v_{k-1}]^T(t)=
    \begin{cases}
      v_i & \mbox{if $t\in[t_i,t_{i+1})$ for $i=0\ldots k-1$,}\\
      0 &\mbox{if $t\in[-m,-1)$,}
    \end{cases}\\
    \Gamma_+&:C_k \mapsto \C^{nk} &&
    \Gamma_+x=\left[x(-1)_+,x(t_1)_+,\ldots,
      x(t_{k-1})_+\right]^T\mbox{,}\\
    \Gamma_-(\mu)&:C_k\mapsto \C^{nk} && \Gamma_-(\mu)x=
    \left[\mu x(0)_-,x(t_1)_-,\ldots, x(t_{k-1})_-\right]^T\mbox{,}\\
    M_k(\mu)&:C_k\mapsto C_{k,0} &&\mbox{\ where}\\
    &\left[M_k(\mu)x\right](t)=&&
  \begin{cases}
    \int\limits_{a_k(t)}^t\int\limits_0^m\d_\theta\eta(s,\theta)
    \begin{cases}
      x(s-\theta) &\mbox{if $s-\theta\geq -1$}\\
      \mu x(1+s-\theta)&\mbox{if $s-\theta< -1$}
    \end{cases} \\ &\hspace*{-5ex}\mbox{for $t>-1$,}\\
    \mu x(1+t) &\hspace*{-5ex}\mbox{for $t\leq-1$,}
  \end{cases}
  \end{aligned}
\end{equation}
and $a_k(t)=t_i$ if $t\in J_i$ and $i\geq0$.  The map $S$ extends a
tuple of vectors into a piecewise constant function using the elements
of the tuple as the values on the subintervals $J_i\subset[-1,0]$, and
setting the function to $0$ on $[-m,-1)$. $\Gamma_+$ and $\Gamma_-$
are defined in exactly the same way as in \eqref{eq:opsdef} (again,
the notation $x(t_i)_\pm$ refers to left-sided and right-sided limits
of $x$ at $t_i$). The definition of $M_k(\mu)$ is identical to the
single delay case for $t>-1$. For $t\leq-1$ we define $M_k(\mu)$ as a
shift multiplied by $\mu$. In the same manner as for the single delay
we consider the extended initial-value problem
\begin{align}
  \label{eq:ivpxgende}
 t&>-1:& \dot x(t)&=\int_0^m\d_\theta\eta(t,\theta)
  \begin{cases}
    x(t-\theta) &\mbox{if $t-\theta\geq -1$,}\\
    \mu x(1+t-\theta)&\mbox{if $t-\theta< -1$,}
  \end{cases}\\
  \label{eq:ivpxgenic}
  &&x(t_i)_+&=v_i \mbox{\quad for $i=0\ldots k-1$,}\\
  \label{eq:ivpxgenshift}
  t&\leq-1: & x(t)&=\mu x(1+t)\mbox{.}
\end{align}
Note that the initial values $v_i$ are given only at the discontinuity
points $t_i\in[-1,0]$ ($i\geq0$). The solution $x\in[-m,-1)$ is
defined by a \emph{backward} shift using \eqref{eq:ivpxgenshift}. The
precise meaning of the initial-value problem is given by its
corresponding integral equation, which is a fixed point problem in
$C_k$, and can be expressed as
\begin{equation}
  \label{eq:fixppgen}
  x=Sv+M_k(\mu)x
\end{equation}
using the maps $S$ and $M_k(\mu)$. The following lemma gives a
sufficient condition for the existence of a unique solution $x\in C_k$
of \eqref{eq:fixppgen} and the invertibility of $\id-M_k(\mu)$. It is
a straightforward generalization of Lemma~\ref{thm:uniquext}. The only
difference in the proof is that we can achieve $\|M_k(\mu)\|<1$ only
in a weighted maximum norm $\|\cdot\|_R$ (which is nevertheless
equivalent to $\|\cdot\|_\infty$).
\begin{lemma}[Unique solution for initial-value problem]
  \label{thm:uniquegen} Let $R>1$ be fixed. Then the fixed point
  problem \eqref{eq:fixppgen} has a unique solution $x$ for all
  $v\in\C^{nk}$ and for all complex $\mu$ satisfying $|\mu|<R$ if the
  number $k$ of the sub-intervals is chosen such that
  \begin{equation}\label{eq:genkbound}
    R^{1/k}<1+\bar V^{-1}\frac{R-|\mu|}{R^{m+1}}\log R\mbox{.}    
  \end{equation}
\end{lemma}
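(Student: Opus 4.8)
The plan is to mirror the proof of Lemma~\ref{thm:uniquext} but replace the ordinary maximum norm by a weighted maximum norm that compensates for the backward-shift terms in $M_k(\mu)$. First I would introduce a weight that decays as one goes back in time, say $w(t)=R^{-t}$ (or equivalently assign the subinterval $J_i$ the weight $R^{-t_i}$), and define the weighted norm $\|x\|_R=\sup_{t\in[-m,0]}\,R^{t}\,|x(t)|$. This norm is clearly equivalent to $\|\cdot\|_\infty$ on the compact interval $[-m,0]$, since $R^{t}$ is bounded above and below by positive constants there. The point of the weight is that the shift operation $x\mapsto\mu\,x(1+\cdot)$ appearing on $[-m,-1)$ gains a factor $R^{-1}$ in the weighted norm relative to what it would cost in the plain norm, because shifting back by $1$ in time multiplies the weight by $R$; combined with the factor $\mu$ this contributes roughly $|\mu|/R<1$.

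Next I would estimate $\|M_k(\mu)\|_R$ by splitting the definition \eqref{eq:opsgendef} of $M_k(\mu)$ into its two cases. On $t\le-1$ the operator is exactly the $\mu$-weighted shift, so in the weighted norm its contribution is bounded by $|\mu|R^{-1}\le |\mu|/R<1$. On $t>-1$ the operator is the integral over $[a_k(t),t]$ of the Lebesgue--Stieltjes functional; here the interval of integration has length at most $1/k$, the total variation of $\eta(s,\cdot)$ is bounded by $\bar V$ via \eqref{eq:etabound}, and the history term $x(s-\theta)$ or $\mu\,x(1+s-\theta)$ reaches back at most $m$ units in time, so evaluating it in the weighted norm costs at most a factor $R^{m}$ (and the $\mu$ in the far-history branch is again absorbed since $|\mu|<R$). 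I would assemble these into an estimate of the form
\begin{equation*}
  \|M_k(\mu)\|_R\le \frac{|\mu|}{R}+\bar V\,R^{m}\,\bigl(R^{1/k}-1\bigr)\,\frac{1}{R},
\end{equation*}
where the factor $R^{1/k}-1$ is what the weight picks up across a single subinterval of length $1/k$ (an exponential-of-arc-length estimate on the weight's variation over $J_i$), and the remaining $R^{-1}$ comes from normalizing at the right endpoint.

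The crux is then purely algebraic: I want $\|M_k(\mu)\|_R<1$, and I would show that this is exactly what hypothesis \eqref{eq:genkbound} guarantees. Rearranging the target inequality $\frac{|\mu|}{R}+\bar V R^{m-1}(R^{1/k}-1)<1$ gives $R^{1/k}-1<\bar V^{-1}R^{1-m}\frac{R-|\mu|}{R}=\bar V^{-1}\frac{R-|\mu|}{R^{m}}$; to reach the stated form with $\log R$ I would use the elementary bound $R^{1/k}-1\le \frac{1}{k}R^{1/k}\log R$ or, more cleanly, note that \eqref{eq:genkbound} is equivalent (after taking logarithms and using $\log(1+z)\ge$ the appropriate bound) to the displayed contraction estimate, so the condition on $k$ is precisely calibrated to force contraction. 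The main obstacle, and the step demanding the most care, is getting the weighted-norm bookkeeping on the two branches of $M_k(\mu)$ exactly right so that the constants $R^{m}$, $R^{1/k}-1$, and $\log R$ come out matching \eqref{eq:genkbound} rather than a weaker inequality; once $\|M_k(\mu)\|_R<1$ is established, the Neumann series shows $\id-M_k(\mu)$ is invertible and the fixed point problem \eqref{eq:fixppgen} has the unique solution $x=[\id-M_k(\mu)]^{-1}Sv$, completing the proof exactly as in Lemma~\ref{thm:uniquext}.
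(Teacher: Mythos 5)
Your overall strategy is the same as the paper's: introduce the weighted norm $\|x\|_R=\sup_{t\in[-m,0]}R^t|x(t)|$, note its equivalence to $\|\cdot\|_\infty$, bound the backward-shift part on $[-m,-1)$ by $|\mu|/R$, bound the integral part on $t>-1$, and conclude by a Neumann series. However, your key quantitative step is false. You claim
\begin{displaymath}
  \|M_k(\mu)\|_R\le \frac{|\mu|}{R}+\bar V\,R^{m}\,\bigl(R^{1/k}-1\bigr)\,\frac{1}{R}\mbox{,}
\end{displaymath}
but the factor that integration over a subinterval of length $1/k$ actually produces in the weighted norm is $\frac{R^{1/k}-1}{\log R}$, not $\frac{R^{1/k}-1}{R}$: for $t\in J_i$,
\begin{displaymath}
  R^t\int_{a_k(t)}^t R^{-s}\,\d s=\frac{R^{\,t-a_k(t)}-1}{\log R}\le\frac{R^{1/k}-1}{\log R}\mbox{,}
\end{displaymath}
and this bound is essentially attained as $t-a_k(t)\to 1/k$. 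You appear to have conflated the \emph{variation} of the weight over $J_i$ (proportional to $R^{1/k}-1$) with its \emph{integral} (which carries the additional $1/\log R$); the extra factor $1/R$ you attach is unexplained. Since $\log R<R$ for every $R>1$, your claimed factor is strictly smaller than the correct one, so the estimate is not just unproven but wrong. Concretely, already for $m=1$, $\bar V=1$ (a single point delay $\theta_0$) and $1<R<\e$, the function $x(s)=R^{-s}$ (so $\|x\|_R=1$) gives $\|M_k(\mu)x\|_R$ close to $R^{\theta_0}\frac{R^{1/k}-1}{\log R}>R^{1/k}-1$, which exceeds your bound.

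This matters because the hypothesis \eqref{eq:genkbound} has no slack to absorb the error. With the correct estimate,
\begin{displaymath}
  \|M_k(\mu)\|_R\le R^m\bar V\,\frac{R^{1/k}-1}{\log R}+\frac{|\mu|}{R}\mbox{,}
\end{displaymath}
the requirement that the right-hand side be less than $1$ is \emph{exactly equivalent} to \eqref{eq:genkbound}; that is precisely where the $\log R$ in the lemma comes from. Your derivation instead reduces \eqref{eq:genkbound} to the weaker target $R^{1/k}-1<\bar V^{-1}(R-|\mu|)/R^{m}$ by invoking $\log R<R$ --- but this inequality covers the discrepancy only because your operator bound was too optimistic by exactly the factor $R/\log R$. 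Once you replace your estimate by the correct one (the paper does this by computing the weighted norms of the shift $S(\theta)$ and of the integration operator $J$ separately, and bounding the Stieltjes integrand by $R^m\bar V\|x\|_R$ using \eqref{eq:etabound} and $|\mu|\le R$), the rest of your argument closes exactly as in the paper: \eqref{eq:genkbound} gives $\|M_k(\mu)\|_R<1$, hence $\id-M_k(\mu)$ is invertible and \eqref{eq:fixppgen} has the unique solution $x=[\id-M_k(\mu)]^{-1}Sv$.
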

Note that $\bar V$ is an upper bound on the total variation of
$\eta$. The left-hand side of \eqref{eq:genkbound} approaches $1$ for
$k\to\infty$ whereas the right-hand side is larger than $1$ if $R>1$
and $R>|\mu|$.
\begin{proof}
  We define the norm
  \begin{displaymath}
    \|x\|_R=\max_{t\in[-m,0]}|R^t x(t)|
  \end{displaymath}
  on the space $\Lint^\infty([-m,0];\C^n)$ of bounded functions and the spaces
  $C_k$ and $C_{k,0}$ (which are subspaces of $\Lint^\infty$). This
  norm is equivalent to the standard maximum norm. Thus,
  \eqref{eq:fixppgen} has a unique solution in $C_k$ whenever
  $\|M_k(\mu)\|_R<1$.  Two operators that appear in $M_k(\mu)$ are shift and integration. Their respective domains and norms are:
  \begin{align*}
    S(\theta):&\ \Lint^\infty([-m,0];\C^n)\mapsto\Lint^\infty([-m,0];\C^n) 
    &&\mbox{defined by}\\
    \left[S(\theta)x\right](t)&=
    \begin{cases}
      x(t-\theta) &\mbox{if $t-\theta\in[-m,0]$,}\\
      0 &\mbox{otherwise,}
    \end{cases}
    &&\|S(\theta)x\|_R\leq R^\theta\|x\|_R\\
    J:&\Lint^\infty([-m,0];\C^n)\mapsto C_{k,0} &&\mbox{defined by}\\
    [Jx](t)&=\int\limits_{a_k(t)}^t x(s)\d s\mbox{,} 
    &&\|Jx\|_R\leq
    \frac{R^{1/k}-1}{\log R}\|x\|_R\mbox{.}
  \end{align*}
  In order to find the $\|\cdot\|_R$-norm of the integrand (which is
  in $\Lint^\infty([-m,0];\C^n)$ if we extend it with zero for
  $t\in[-m,-1)$) in the definition of $M_k(\mu)$,
  \eqref{eq:opsgendef}, we have to estimate the $\|\cdot\|_R$-norm for
  a Stieltjes sum over an arbitrary partition
  $0\leq\theta_0<\ldots<\theta_N\leq m$ (with arbitrary intermediate
  values $\tilde\theta_j\in[\theta_j,\theta_{j+1}]$):
  \begin{align*}
    \lefteqn{\left\|\sum_{j=0}^{N-1}
      \left[\eta(t,\theta_{j+1})-\eta(t,\theta_j)\right]
      \begin{cases}
        x(t-\tilde\theta_j) &\mbox{if $t-\tilde\theta_j\geq -1$}\\
        \mu x(1+t-\tilde\theta_j)&\mbox{if $t-\tilde\theta_j< -1$}
      \end{cases}\right\|_R}\\
  \leq&\sup_{t\in[-1,0]}R^t\sum_{j=0}^{N-1}
      \left|\eta(t,\theta_{j+1})-\eta(t,\theta_j)\right|
      \begin{cases}
        \left|\left[S(\tilde\theta_j)x\right](t)\right| 
        &\mbox{if $t-\tilde\theta_j\geq -1$}\\
        R\left|\left[S(\tilde\theta_j-1)x\right](t)\right|
        &\mbox{if $t-\tilde\theta_j< -1$}
      \end{cases}\\
  \leq&\sup_{t\in[-1,0]}\sum_{j=0}^{N-1}
      \left|\eta(t,\theta_{j+1})-\eta(t,\theta_j)\right|
      \begin{cases}
        \phantom{R\,}\|S(\tilde\theta_j)x\|_R &\mbox{if $t-\tilde\theta_j\geq -1$}\\
        R\,\|S(\tilde\theta_j-1)x\|_R&\mbox{if $t-\tilde\theta_j< -1$}
      \end{cases}\\
  \leq&\sup_{t\in[-1,0]}\sum_{j=0}^{N-1}
      \left|\eta(t,\theta_{j+1})-\eta(t,\theta_j)\right|
      R^{\tilde\theta_j}\|x\|_R\\
      \leq& R^m\|x\|_R
    \sup_{t\in[-1,0]}V_0^m\eta(t,\cdot)= R^m\|x\|_R\bar V\mbox{.}
  \end{align*}
  Combined with the norm estimate for integration from $a_k(t)$ to $t$
  we obtain that
  \begin{equation}\label{eq:genmkbound01}
    \|\left[M_k(\mu)x\right]\vert_{[-1,0]}\|_R\leq
    \|J\|_RR^m\bar V\|x\|_R\leq 
    R^m\bar
    V \frac{R^{1/k}-1}{\log R}\|x\|_R\mbox{.}    
  \end{equation}
  The part of $M_k(\mu)x$ on the interval $[-m,-1)$ has the
  $\|\cdot\|_R$-norm:
  \begin{equation}\label{eq:genmkbound1m}
    \|\left[M_k(\mu)x\right]\vert_{[-m,-1)}\|_R\leq |\mu|\|S(-1)x\|_R\leq 
    |\mu|R^{-1}\|x\|_R\mbox{,}
  \end{equation}
  where we note that $S(-1)$ maps $C_k$ back into $C_k$.
  Adding up the inequalities \eqref{eq:genmkbound01} and
  \eqref{eq:genmkbound1m} we obtain an upper bound for
  $\|M_k(\mu)x\|_R$:
  \begin{equation}
    \label{eq:proofgenmkbound}
    \|M_k(\mu)x\|_R\leq \left[R^m\bar
      V \frac{R^{1/k}-1}{\log R}+\frac{|\mu|}{R}\right]\|x\|_R\mbox{.}
  \end{equation}
  The factor in front of $\|x\|_R$ on the right-hand side is less than
  unity if $k$ satisfies the inequality \eqref{eq:genkbound} required
  in the statement of the lemma.
\end{proof}

The invertibility of $\id-M_k(\mu)$ permits us to choose exactly the
same constructions for the characteristic matrix $\Delta_k(\mu)$ and
for the isomorphisms $E$ and $F$ proving Lemma~\ref{thm:multiplicity}
in the same way as for the single delay case (see
\eqref{eq:deltaeexp}, \eqref{eq:texp} and \eqref{eq:efdef}):
\begin{align*}
  X_1&=C_k\mbox{,}\\
  X_2&=\C^{nk}\times C_{k,0}\mbox{,}\\
  G(\mu)&=\id-\mu T=\left[\id-S\Gamma_-(0)-M_k(0)\right]^{-1}
  \left[\id-S\Gamma_-(\mu)M_k(\mu)\right]\mbox{,}\\
  H(\mu)&=\begin{pmatrix}
    \Delta_k(\mu) & 0\\ 0& \id
  \end{pmatrix}\mbox{,}\\
  \Delta_k(\mu)&=\id -\Gamma_-(\mu)\left[\id-M_k(\mu)\right]^{-1}S
  \quad\in\C^{nk}\times\C^{nk}\mbox{,}\\
  E(\mu)&=\left[\id-M_k(\mu)\right]^{-1}[Sv+\phi]\mbox{,}\\
  F(\mu)&=  \begin{bmatrix}
    \Gamma_+-\Gamma_-(0)+
    \Gamma_-(\mu)\left[\id-M_k(\mu)\right]^{-1}
    \left[\id-S\Gamma_+-M_k(0)\right]\mbox{,}\\
    \id-S\Gamma_+-M_k(0)\mbox{.}
  \end{bmatrix}
\end{align*}
With these constructions the maps $E(\mu):X_2\mapsto X_1$ and
$F(\mu):X_1\mapsto X_2$ are isomorphisms for $|\mu|<R$ (see
\eqref{eq:einvdef} and \eqref{eq:finvdef} for the inverses). The
relation $H(\mu)=F(\mu)G(\mu)E(\mu)$ makes the infinite-dimensional
eigenvalue problem of the time-$1$ map, $[\id-\mu T]x=0$, equivalent
to the finite-dimensional eigenvalue problem $\Delta_k(\mu)v=0$.

We remark that the inverse of $\id-S\Gamma_-(0)-M_k(0)$, which is
present in the definition of $G(\mu)$, exists because the
initial-value problem \eqref{eq:tgende}--\eqref{eq:tgenshift} has only
the trivial solution for $x=0$ (the operator is a compact perturbation
of the identity). In other words, the existence of this inverse
follows from the fact that the time-$1$ map $Tx$ is well defined (and
equal to zero) in $x=0$. Moreover, the extension of $T$ to $C_k$, defined by
\begin{displaymath}
  T=\left[\id-S\Gamma_-(0)-M_k(0)\right]^{-1}
  \left[S(\Gamma_-(1)-\Gamma_-(0))+M_k(1)-M_k(0)\right]\mbox{,}
\end{displaymath}
which maps $C_k$ back into $C_k$, has the property that $T^mx$ is
continuous (that is, $T^mx\in C([-m,0];\C^n)$) for all $x\in
C_k$. Hence, the spectrum of the extension $T:C_k\mapsto C_k$ is the
same as the spectrum of the original $T:C([-m,0];\C^n)\mapsto
C([-m,0];\C^n)$.

\bibliographystyle{siam} \bibliography{delay}

\end{document}